\theoremstyle{plain}
\newtheorem{lemma}{Lemma}[section]
\newtheorem{theorem}[lemma]{Theorem}
\newtheorem{proposition}[lemma]{Proposition}
\newtheorem{corollary}[lemma]{Corollary}
\theoremstyle{definition}
\theoremstyle{remark}
\newtheorem{remark}[lemma]{Remark}
\newcommand{\mA}  {\mathbb A}
\newcommand{\mB}{\mathbb B} \newcommand{\mC}{\mathbb C}
\newcommand{\mN}{\mathbb N}  \newcommand{\mP}{\mathbb P}
\newcommand{\mQ}{\mathbb Q}
\newcommand{\mZ}{\mathbb Z}
\newcommand{\calA}  {\mathcal A}
 \newcommand{\calC}{\mathcal C} 
 \newcommand{\calI}{\mathcal I} 
 \newcommand{\calL}{\mathcal L} 
 \newcommand{\calO}{\mathcal O} 
 \newcommand{\calR}{\mathcal R} 
\newcommand{\calW}{\mathcal W} \newcommand{\calX}{\mathcal X}
\newcommand{\sfA}  {\mathsf A}
 \newcommand{\sfL}{\mathsf L} \newcommand{\sfM}{\mathsf M}
  \newcommand{\sfS}{\mathsf S}
\newcommand{\sfT}{\mathsf T}
\newcommand{\sfa}  {\mathsf a}
\newcommand{\sfb}{\mathsf b}
\newcommand{\sfk}{\mathsf k}  \newcommand{\sfm}{\mathsf m}
\newcommand{\sfn}{\mathsf n}  \newcommand{\sfp}{\mathsf p}
 \newcommand{\sfx}{\mathsf x}
\newcommand{\grS}{\Sigma}
\newcommand{\gra}{\alpha}        \newcommand{\grg}{\gamma}
 \newcommand{\grl}{\lambda}     \newcommand{\grs}{\sigma}
\newcommand{\grf}{\varphi}      
\newcommand{\grG}{\Gamma} \newcommand{\grD}{\Delta}
\newcommand{\grL}{\Lambda}
\newcommand{\mk}  {\Bbbk}
\newcommand{\SM}{\mathsf S \mathsf M}
\newcommand{\lra}      {\longrightarrow}
\newcommand{\vuoto}    {\varnothing}
\renewcommand{\geq}    {\geqslant}
\renewcommand{\leq}    {\leqslant}
\renewcommand{\setminus}    {\smallsetminus}
\newcommand{\ristretto}{\bigr|}
\DeclareMathOperator{\Hom}  {Hom}
\DeclareMathOperator{\End}  {End}
\DeclareMathOperator{\Pic}  {Pic}
\DeclareMathOperator{\Spec} {Spec}
\newcommand{\ol}{\overline}
\newcommand{\wt}{\widetilde}
            \newcommand{\st}       {\, : \,}
         \newcommand{\mand}     {\text{ and }}
\DeclareMathOperator{\Id}{Id}
\DeclareMathOperator{\supp}{supp}
\DeclareMathOperator{\Char}{char}
\DeclareMathOperator{\lex}{lex}
\newcommand*{\longhookrightarrow}{\ensuremath{\lhook\joinrel\relbar\joinrel\rightarrow}}
\newcommand{\Young}[1]{{\lower2pt\hbox{\young(#1)}}}
\newcommand{\YYoung}[1]{{\lower7pt\hbox{\young(#1)}}}
\newcommand{\ione}{i_1}
\newcommand{\itwo}{i_2}
\newcommand{\ienne}{i_N}
\newcommand{\jone}{j_1}
\newcommand{\jtwo}{j_2}
\newcommand{\jenne}{j_N}
\definecolor{revisionColor}{rgb}{0.0,0.0,0.0}
\newcommand{\rev}[1]{{\color{revisionColor}{#1}}}
\definecolor{rrevisionColor}{rgb}{0.0,0.0,0.0}
\newcommand{\rrev}[1]{{\color{rrevisionColor}{#1}}}
\definecolor{revColor}{rgb}{0.0,0.0,0}
\newcommand{\rv}[1]{{\color{revColor}{#1}}}
\newcommand{\revi}[1] {{\color{revColor}{#1}}}
\definecolor{revtwoColor}{rgb}{0.0,0.0,0.0}
\newcommand{\revtwo}[1]{{\color{revtwoColor}{#1}}}
\title{Standard monomial theory for wonderful varieties}
\author{P.~Bravi, R.~Chiriv\`\i, J.~Gandini and A.~Maffei}
\subjclass[2010]{Primary 14M27; Secondary 13F50, 20G05}
\keywords{Standard monomial theory, wonderful variety, degeneration, rational singularity}
\address{Dipartimento di Matematica ``Guido Castelnuovo''\\ Universit\`a di Roma ``La Sapienza''\\Piazzale Aldo Moro n.~5\\ 00185 Roma\\Italy}
\email{bravi@mat.uniroma1.i}
\address{Dipartimento di Matematica e Fisica ``Ennio De Giorgi''\\ Universit\`a del Salento\\ Via per Arnesano\\ 73047 Monteroni di Lecce (LE)\\ Italy}
\email{rocco.chirivi@unisalento.it}
\address{Scuola Normale Superiore di Pisa\\Piazza dei Cavalieri n.~7\\ 56127 Pisa (PI)\\Italy}
\email{jacopo.gandini@sns.it}
\address{Dipartimento di Matematica\\ Universit\`a di Pisa\\ Largo Bruno Pontecorvo 5\\ 56127 Pisa (PI)\\ Italy}
\email{maffei@dm.unipi.it}
\begin{document}

\begin{abstract}
A general setting for a \rv{standard monomial theory on a multiset} is introduced and applied to the Cox ring of a wonderful \rrev{variety}. This gives a degeneration result of the Cox ring to a multicone over a partial flag variety. Further, \rrev{we deduce that the Cox ring has rational singularities.}
\end{abstract}

\maketitle

\section{Introduction}

The first appearance of the idea of a standard monomial theory may be traced back to Hodge's study of 
Grassmannians in \cite{hodge_1943}, \cite{hodge_1994}. Then Doubilet, Rota and Stein found \rrev{a similar theory} 
for the coordinate \rv{ring} of the space of matrices in \cite{rota}. This was reproved and generalized to the space 
of symmetric and antisymmetric matrices by De Concini and Procesi in \cite{cdc_procesi}.

A systematic program for the development of a standard monomial theory for \rev{quotients} of reductive groups by 
parabolic subgroups was then started by Seshadri in \cite{seshadri} where the case of minuscule parabolics is considered. Further, in \cite{seshadri_lakshmibai} Seshadri and Lakshmibai noticed that the above \rrev{mentioned} results could be obtained as specializations of \rev{their} general theory.

This program was finally completed by Littelmann. Indeed, in \cite{littelmann_paths}, he found a combinatorial character formula for representations of symmetrizable Kac-Moody groups introducing the language of L-S paths. Moreover, he used \rrev{L-S paths} \rv{to construct a standard monomial theory for Schubert varieties of symmetrizable Kac-Moody groups in \cite{littelmann_contracting}}. This theory has been developed in the context of LS algebras over \rrev{posets} with bonds in \cite{chirivi_ls}, \cite{chirivi_thesis} and \cite{chirivi_multicone}.

We want now to briefly recall what a standard monomial theory is, the reader may see \cite{CLM} for further details about this general setting. Let $\mA$ be a finite subset of an algebra $A$ and suppose we are given a transitive antisymmetric binary relation $\longleftarrow$ on $\mA$. We define \rrev{a} \rev{formal} monomial $\sfa_1\sfa_2\cdots\sfa_N$ of elements of $\mA$ as standard if $\sfa_1\longleftarrow \sfa_2\longleftarrow\cdots\longleftarrow\sfa_N$. If the set of standard monomials is a basis of the algebra $A$ as a vector space then we say that $(\mA,\longleftarrow)$ is a standard monomial theory for $A$. Suppose, further, we have \rv{an order} $\leq$ on the monomials of elements of $\mA$. By the previous assumption, we may write any non-standard monomial $\sfm'$ as a linear combination of standard monomials. If in such an expression only standard monomials $\sfm$ with $\sfm'\rv{\leq}\sfm$ appear, then we say that we have a straightening relation for $\sfm'$. If we have a straightening relation for each non-standard monomial, then we say that $(\mA,\longleftarrow,\rv{\leq})$ is a standard monomial theory with straightening relations.

Given a simply connected semisimple algebraic group $G$ over an algebraically closed field $\mk$ of characteristic $0$, a Borel subgroup $B\subset G$ and a maximal torus $T\subset B$, let $\Lambda^+\subset\Lambda$ be the monoid of dominant weights and the lattice of weights, respectively. For a dominant weight $\lambda$, let $V_\lambda$ be the irreducible $G$--module of highest weight $\lambda$. Let $B\subset P\subset G$ be a parabolic subgroup of $G$ \rv{contained in the stabilizer of} the line generated by a highest weight vector in $V_\lambda$. Moreover, we denote by $\mB_\lambda$ the set of L-S paths of shape $\lambda$ \rv{(see Section \ref{sec:SMTMulticoneFlag} for \rv{details}).}

Littelmann's construction \rev{provides} a basis $\mA_\lambda=\{\rrev{\sfp_\pi}\,|\,\pi\in\mB_\lambda\}$, indexed by L-S paths, for the module $\Gamma(G/P,\calL_\lambda)\simeq V_\lambda^*$, where $\calL_\lambda$ is the line bundle over $G/P$ associated \rrev{with} $\lambda$. \rrev{The ring of sections $A_\lambda=\bigoplus_{n\geq0}\Gamma(G/P,\calL_{n\lambda})$ is generated in degree one and it is the coordinate ring of the cone over the closed embedding $G/P\longhookrightarrow\mP(V_\lambda)$ induced by $\calL_\lambda$.} On the basis $\mA_\lambda$ one may define a relation $\longleftarrow$ and \rv{an order} $\leq$ such that $(\mA_\lambda,\longleftarrow,\rv{\leq})$ is a standard monomial theory with straightening relations for \rev{$A_\lambda$}. 

In \cite{CM_ring}, the second and fourth named authors  adapted Littelmann's basis to the Cox ring (see below) of complete symmetric varieties;  this class of varieties has been introduced by \rrev{De Concini} and \rrev{Procesi} in \cite{CP}. As a \rrev{consequence}, they proved the degeneration of the Cox ring to the coordinate ring of a suitable multicone over a flag variety. This degeneration allowed a new proof of the rational singularity property for the Cox ring of complete symmetric varieties.

The purpose of the present paper is a further extension of these results to the Cox ring of wonderful varieties. As a first step, we \rrev{take the opportunity} to introduce a general setting for a \rv{standard monomial theory on a multiset} modelled on the above recalled one. This setting may be briefly summarized as follows, see Section \ref{sec:multiGradedSMT} below for details.

Let $\mA\doteq\mA_1\rev{\sqcup}\mA_2\rev{\sqcup}\cdots\rev{\sqcup}\mA_n$ be the union of \rev{disjoint} finite subsets of an algebra $A$. Suppose we have a binary relation $\longleftarrow$ on $\mA$ such that $\longleftarrow$ restricted to $\mA_i$ is transitive and antisymmetric for all $i=1,2,\ldots,n$ and, further, suppose we have bijective maps $\phi_{i,j}$\rev{, called swaps,} from the set of comparable pairs $\sfa\longleftarrow\sfb$ of $\mA_i\times\mA_j$ to the set of comparable pairs $\rev{\sfb'}\longleftarrow\rev{\sfa'}$ of $\mA_j\times\mA_i$ satisfying \rv{$\phi_{i,i} = \Id$ and $\phi_{i,j} \phi_{j,i} = \Id$}. We define a formal monomial $\sfa_1\sfa_2\cdots\sfa_N$ as weakly standard if $\sfa_1\longleftarrow\sfa_2\longleftarrow\cdots\longleftarrow\sfa_N$, and we say it is standard if all monomials obtained by \rrev{repeatedly} swapping \rrev{adjacent pairs in all possible ways, via the $\phi_{i,j}$'s,} are weakly standard.
If the set of standard monomials is a basis for $A$ as a vector space,
we say that $(\mA,\longleftarrow,\phi_{i,j})$ is a \rv{standard monomial theory on the multiset $\mA$ for the algebra $A$}. As above we introduce also \rrev{the notions of} \rv{order for monomials} \rrev{and} straightening relations for non-standard monomials.

\rev{We prove that the kernel of the natural map from the \rrev{symmetric} algebra over $\mA$ to $A$ is generated by the straightening relations of minimally non-standard monomials, that is by the straightening relations of those non-standard monomials which are not a product of non-standard monomials of smaller degree. In particular, \rrev{we show that} if any weakly standard monomial is standard then such kernel is generated in degree two.}

\rev{We \rrev{also show} how, given a valuation map for monomials that is compatible \rrev{with} the \rv{order $\leq$}, one may construct a flat degeneration of $A$.}

As a motivating example for this setting one may see the \rv{standard monomial theory on a multiset} for the multicone over a flag variety constructed by the second named author in \cite{chirivi_multicone}. This is described in details in Section \ref{sec:SMTMulticoneFlag}. \rv{In the same Section the multicone associated to fundamental weights for $\sfS\sfL$ and a multicone for $\sfS\sfL_2\times\sfS\sfL_2$ are studied in great detail.}

Now we recall \rrev{which is} the type of varieties we are interested in. A $G$--variety $X$ is wonderful of rank $r$ if it satisfies the following conditions:
\begin{itemize}
\item[--] $X$ is smooth and projective;
\item[--] $X$ possesses an open orbit whose complement is a union of $r$ smooth prime divisors, called the boundary divisors, with non-empty transversal intersections;
\item[--] any orbit closure in $X$ equals the intersection of the prime divisors which contain it.
\end{itemize}

Examples of wonderful varieties are the flag varieties, which are the wonderful varieties of rank zero, and the complete symmetric varieties. \rv{Wonderful varieties are instances of spherical varieties (\cite{luna_spherical}): normal \rv{$G$--varieties} with a dense orbit under the action of a Borel subgroup of $G$}. See \cite{BL} for a general introduction to wonderful varieties.

%
%

%
%

If $X$ is a wonderful $G$--variety, then the Picard group $\Pic(X)$ is freely generated by the classes of the $B$--stable prime divisors of $X$ which are not $G$--stable \rv{(see, for example, Section~2.2 in \cite{brion_cox})}. These divisors are called the colors of $X$. Since $X$ contains an open $B$--orbit, the colors form a finite set $\grD$, so that $\Pic(X)$ is a free lattice of finite rank. \rev{Given $D \in \mZ\grD$, we denote by $\calL_D$ the corresponding line bundle.}

The direct sum
$$
C(X) \quad \doteq \bigoplus _{\calL \in \Pic(X)} \Gamma(X,\calL),
$$
\rev{has a ring structure (see \rrev{Section~\ref{sec:SMTCox}} below) and it} is called the Cox ring of $X$.

%
%

%
%

Denote by $\grs_1, \ldots, \grs_r$ the boundary divisors of $X$, and let $s_i$ be \rv{a} \rev{section of $\rev{\calO(\grs_i)}$ defining $\sigma_i$}, for $i=1,\ldots,r$. As an algebra $C(X)$ is generated by the sections of the line bundles $\calL_D\rev{=\calO(D)}$ with $D \in \grD$ together with the sections $s_1, \ldots, s_r$.

%
%

By definition, $X$ contains a unique closed $G$--orbit $Y \simeq G/P$ \rev{for a suitable parabolic subgroup $P$}, and given $D \in \mN \grD$ we denote by $\grl_D$ the highest weight of the dual of the simple $G$--module $\grG(Y,\calL_D\ristretto_Y)$, so that $\calL_D\ristretto_Y \simeq \calL_{\grl_D}$ corresponds to the equivariant line bundle on $G/P$ associated to the dominant weight $\grl_D$. By taking into account the \rv{decomposition} of $\Gamma(X,\calL_D)$ as a $G$--module \rv{(see Proposition \ref{prop: decomposizione sezioni})}, we lift Littelmann's basis of $\grG(Y,\calL_{\grl_D})$ to $X$, and we take as algebra generators for $C(X)$ this set of lifts together with the sections $s_1, \ldots, s_r$.

\rev{Consider the coordinate ring $C(Y)=\oplus_{E\in\mN\Delta}\Gamma(Y,\rrev{\calL_E\ristretto_Y})$ of the multicone over the flag variety $Y$ associated to the dominant weights $\lambda_D$, with $D\in\Delta$.}
\rev{In Section \ref{sec:SMTCox} we construct a \rv{standard monomial theory on a multiset} for $C(X)$ by extending, in a natural way, \rv{that} of $C(Y)$.} \rv{Further an explicit example of our construction is given.}

%
%

As a consequence of our standard monomial theory, we obtain a flat deformation which degenerates $C(X)$ to the product $\mk[s_1, \ldots, s_r] \otimes \rev{C(Y)}$. Since multicones over flag varieties have rational singularities by \cite{kempf} and, since \rrev{the property of having} rational singularities \rrev{is} stable under deformation by \cite{elkik}, it follows that the Cox ring $C(X)$ has rational singularities as well. \rev{From this it follows at once that, given $D\in\mZ\Delta$, also the ring $C_D(X)=\oplus_{n\geq0}\Gamma(X,\calL_{nD})$ has rational singularities.} Both $C(X)$ and $C_D(X)$, for any $D\in\mZ\Delta$, can be seen as coordinate rings of affine spherical varieties, see Section~3.1 in \cite{brion_cox}, and the fact that affine spherical varieties have rational singularities was already known, see \cite{popov} and \cite{alexeev-brion}.

\section{\rv{Standard monomial theory on a multiset}}\label{sec:multiGradedSMT}

In this section, as a first step, we introduce the notion of a \rv{standard monomial theory on a multiset}. This requires some technical machinery which we express in a very abstract setting. In the next section we see the application to the multicone over a flag variety \rev{and in Section \ref{sec:SMTCox} that to the Cox ring of a wonderful variety.}

For further details the reader may see the various referenced papers as suggested below. In particular, the \rv{standard monomial theory} we are going to introduce is modelled \rrev{on} the general definition of a standard monomial theory given in \cite{CM_modelVarieties}; see also \cite{chirivi_thesis} and \cite{chirivi_multicone} where such kind of standard monomial theory is developed in the context of \rrev{posets} with bonds.

We begin with a field $\mk$ and a commutative $\mk$--algebra $A$. Let $\mA_1,\mA_2,\ldots,\mA_n$ be disjoint finite subsets of $A$ \rv{and} let $\mA\doteq\mA_1\rev{\sqcup}\mA_2\rev{\sqcup}\cdots\rev{\sqcup}\mA_n$\rv{, we call $\mA$ a \emph{multiset}}. 
\revi{By \emph{formal monomial} we mean a monomial in the elements of $\mA$ in the free associative algebra generated by $\mA$.} 
\rv{Define} the \emph{shape} of $\sfa\in\mA_i$ as the index $i$ \rv{and extend} the notion of shape to formal monomials $\sfm\doteq \sfa_1 \sfa_2\cdots \sfa_N$ of elements of $\mA$ by declaring that the shape of $\sfm$ is $(i_1,i_2,\ldots,i_N)$ if $\sfa_h$ has shape $i_h$ for $h=1,2,\ldots,N$.

Suppose we have a binary relation $\longleftarrow$ on $\mA$ that is antisymmetric \rev{and transitive} when restricted to $\mA_i$ for all $i$. We say that a \rv{formal} monomial $\sfa_1 \sfa_2 \cdots \sfa_N$ of elements of $\mA$ is \emph{weakly standard} if $\sfa_1 \longleftarrow \sfa_2 \longleftarrow \cdots \longleftarrow \sfa_N$. Given $i,j$, let $\phi_{i,j}$ be a map from the set of weakly standard \rv{formal} monomials of shape $(i,j)$ to the set of weakly standard \rv{formal} monomials of shape $(j,i)$. \rv{We assume that these maps verify} $\phi_{i,i} = \Id$ and $\phi_{i,j} \phi_{j,i} = \Id$, \rv{and} we call them \rrev{\emph{swap maps}}.

Now let $\sfm = \sfa_1 \sfa_2 \cdots \sfa_N$ be a weakly standard \rv{formal} monomial. Since any pair $\sfa_{j} \sfa_{j+1}$ is weakly standard, we may swap \rrev{it} and obtain a new monomial
$$
	\sfm' \doteq \sfa_1\cdots \sfa_{j-1} \sfa'_{j} \sfa'_{j+1} \sfa_{j+2} \cdots \sfa_N,
$$
where $\sfa'_j \sfa'_{j+1}$ is the swap of $\sfa_{j} \sfa_{j+1}$. If also $\sfm'$ is weakly standard, then we may \rrev{apply another} swap, etc. If the shape of $\sfm$ is a non-decreasing sequence and if all monomials obtained from $\sfm$ by swaps are weakly standard, then we say that $\sfm$ is a \emph{standard \rv{formal} monomial} (notice that the number of swaps of $\sfm$ is surely finite since $\mA$ is a finite set).

\rv{We say that a (commutative) monomial in the symmetric algebra $\mathsf{S}(\mA)$ is weakly standard, respectively, standard if it is the image of a weakly standard, respectively, standard formal monomial of elements of $\mA$ via the natural map from formal monomials to monomials in $\mathsf{S}(\mA)$.}

\rv{
We say that the above datum $(\mA,\phi_{i,j},\longleftarrow)$ is a \emph{\rv{standard monomial theory on the multiset $\mA$}} for $A$ if: the images of the standard monomials of $\mathsf{S}(\mA)$ in $A$ via the natural map $\mathsf{S}(\mA)\longrightarrow A$ are all \rv{distinct} and, moreover, the set of these standard monomials is a basis of $A$ as a $\mk$--vector space.
}

A standard monomial theory usually has another feature, the straightening relations. \rv{We express them by introducing an order $\leq$ on (commutative) monomials in $\mA$ as elements of $A$ with the following properties:}
\begin{itemize}
	\item[i)] if $\sfm,\,\sfm',\,\sfm''$ are monomials in $\mA$ and if $\sfm'\leq \sfm''$, then $\sfm \sfm' \leq \sfm \sfm''$,
	\item[ii)] \rv{for every monomial $\sfm$ the set of monomials $\sfm'$ such that $\sfm\leq\sfm'$ is a finite set.}
\end{itemize}
Since the standard monomials are a $\mk$--basis of $A$, for every non-standard monomial $\sfm'$ of elements of $\mA$ we have a relation
$$
\sfm' = \sum_\sfm a_\sfm \sfm
$$
\rv{in $A$,} expressing $\sfm'$ as a linear combination of the standard monomials $\sfm$.
\rv{If} we have $\sfm' \rv{\leq} \sfm$ whenever $a_\sfm \neq 0$ we say that the above relation is a \emph{straightening relation} for $\sfm'$. If, further, we have straightening relations for all non-standard monomials then we say that $(\mA,\phi_{i,j},\longleftarrow,\rv{\leq})$ is a \rv{standard monomial theory on a multiset} \emph{with straightening relations} for $A$.

\rev{A \rv{formal} non-standard monomial $\sfm\rv{=\sfa_1\sfa_2\cdots\sfa_N}$ is 
\emph{minimally} non-standard if \rv{for all proper \rv{subsequences} $1\leq i_1<i_2<\cdots<i_k\leq N$ of indexes, 
the monomial $\sfa_{i_1}\sfa_{i_2}\cdots\sfa_{i_k}$ is a formal standard monomial.} \revi{We say that a monomial in $\sfS(\sfA)$ is minimally non-standard if it is the image of a minimally non-standard formal monomial.}

Notice that the straightening relations generate the kernel $\calR$ of the map $\sfS(\mA)\longrightarrow A$, i.e. the ideal of relations in the generators $\mA$ for $A$. But fewer relations can suffice as we see in the following theorem.
\begin{theorem}\label{thm:minimallyNonStandard} If $(\mA,\phi_{i,j},\longleftarrow,\rv{\leq})$ is a standard monomial theory with straightening relations for $A$, then $\calR$ is generated by the straightening relations of the minimally non-standard monomials.
\end{theorem}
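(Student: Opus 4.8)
The plan is to show that every straightening relation lies in the ideal $\calI$ generated by the straightening relations of minimally non-standard monomials, by induction on the monomial $\sfm'$ with respect to the order $\leq$ (which is well-founded upwards by property (ii), so we can induct on the finite set of monomials $\geq \sfm'$, or more precisely do a Noetherian induction). Let $\sfm'$ be a non-standard monomial with straightening relation $\sfm' = \sum_{\sfm} a_\sfm \sfm$, all standard $\sfm$ appearing satisfying $\sfm' \leq \sfm$. If $\sfm'$ is itself minimally non-standard, there is nothing to prove. Otherwise, by definition of minimally non-standard, there is a proper subsequence of the factors of $\sfm'$ that is already non-standard; writing $\sfm' = \sfm'_1 \sfm'_2$ (as a product of formal monomials, hence of commutative monomials in $\sfS(\mA)$) with $\sfm'_1$ a non-standard monomial of strictly smaller degree and $\sfm'_2$ the complementary factor, we apply the straightening relation to $\sfm'_1$: $\sfm'_1 = \sum_{\sfn} b_\sfn \sfn$ with each $\sfn$ standard and $\sfm'_1 \leq \sfn$. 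This congruence holds modulo $\calI$ by the inductive hypothesis (here I use that $\sfm'_1 < $ (in the upward order) only things already handled — more carefully, the induction is on $\deg$ first and then on $\leq$, so $\sfm'_1$, having smaller degree, is covered). Multiplying by $\sfm'_2$ and using property (i) of the order, we get $\sfm' \equiv \sum_\sfn b_\sfn (\sfn \sfm'_2) \pmod{\calI}$ with $\sfm' = \sfm'_1\sfm'_2 \leq \sfn \sfm'_2$ for each $\sfn$.

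Now each $\sfn \sfm'_2$ is a monomial of the same degree as $\sfm'$; if it is standard we keep it, and if it is non-standard we replace it, via its straightening relation, by a combination of standard monomials $\sfp$ with $\sfn\sfm'_2 \leq \sfp$ — and we must argue this replacement, too, is legitimate modulo $\calI$. This is where the induction on $\leq$ (not just on degree) enters: since $\sfm' \leq \sfn \sfm'_2$ and, crucially, $\sfn \sfm'_2 \neq \sfm'$ (because $\sfn \neq \sfm'_1$, as $\sfn$ is standard and $\sfm'_1$ is not), we have $\sfm' < \sfn\sfm'_2$ strictly in the order, so $\sfn\sfm'_2$ is strictly higher and its straightening relation is available by the inductive hypothesis. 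Iterating — replacing any non-standard monomial that appears by its straightening relation — terminates because at each step we pass to strictly higher monomials and, by property (ii), there are only finitely many monomials above $\sfm'$. At the end we have expressed $\sfm'$ modulo $\calI$ as a linear combination of \emph{standard} monomials. But $\sfm'$ also equals its own straightening relation, a linear combination of standard monomials, and standard monomials are linearly independent in $A$ (they form a basis); hence the two expressions agree coefficient-by-coefficient modulo $\calR$, and since $\calI \subseteq \calR$ and both sides are honest combinations of standard monomials, we conclude $\sfm' - \sum a_\sfm \sfm \in \calI$. Since $\calR$ is generated by all straightening relations and each of them lies in $\calI$, we get $\calR = \calI$.

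The main obstacle is organizing the induction so that the termination argument is airtight: one is rewriting a monomial into higher and higher monomials, and must be sure this process cannot cycle or run forever. Property (ii) — that $\{\sfm'' : \sfm' \leq \sfm''\}$ is finite — is exactly what makes the upward order well-founded and guarantees termination; property (i) is what lets us multiply a straightening relation of a factor by the complementary factor and preserve the order estimate. A secondary subtlety is the bookkeeping between formal monomials and their images in $\sfS(\mA)$: the decomposition $\sfm' = \sfm'_1 \sfm'_2$ uses that a minimally non-standard \emph{formal} monomial has, by definition, every proper subsequence standard, so choosing a non-standard proper sub-product and its complement makes sense, and passes to $\sfS(\mA)$ because the relevant monomials are just products there. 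Once these two points are pinned down, the rest is the routine descent sketched above.
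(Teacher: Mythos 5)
Your argument is correct and follows essentially the same route as the paper's proof: induct on the order $\leq$ (well-founded upwards by property (ii)), split a non-standard monomial into a non-standard factor and its complement, straighten the factor, multiply back using property (i), and recurse on the strictly larger monomials that appear. The only cosmetic differences are that the paper chooses the factor to be \emph{minimally} non-standard, so that its straightening relation lies in $\calI$ by definition and your auxiliary induction on degree is not needed, and that it concludes by showing the standard monomials span $\sfS(\mA)/\calI$ rather than by verifying each straightening relation individually.
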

\begin{proof} Let $\calI$ be the ideal of $\sfS(\mA)$ generated by the straightening relations for minimally non-standard $\sfm$.
\rrev{Since by definition $\calI\subseteq\calR$ we have a surjective map $\sfS(\mA)/\calI\longrightarrow\sfS(\mA)/\calR$. Moreover the set of standard monomials are a $\mk$--basis for $\sfS(\mA)/\calR\simeq A$, so if we show that the images of standard monomials \revi{generate} $\sfS(\mA)/\calI$ \revi{as a vector space} 
we have $\calI=\calR$.

\rv{In order to prove that in $\sfS(\mA)/\calI$ any monomial $\sfm$ is a $\mk$--linear combination of standard monomials we use induction on the \rv{order $\leq$}. In particular, if $\sfm$ is $\rv{\leq}$--maximal then it is standard by the order requirement in the straightening relations.

So now suppose that $\sfm$ is non-standard and let $\sfm_1,\,\sfm_2$ be monomials such that $\sfm=\sfm_1\sfm_2$ with $\sfm_1$ minimally non-standard. In $\sfS(\mA)/\calI$ we have $\sfm_1=\sum_\sfn a_\sfn\sfn$ where the sum runs over the standard monomials $\sfn$ with $\sfm_1<\sfn$. 
}
%
%
%
\rv{Hence}
$$
\sfm=\sfm_1\sfm_2\equiv\sum a_{\sfn}\sfn\sfm_2\,\,\,(\textrm{mod}\,\,\calI)
$$
and $\sfm<\sfn\sfm_2$ for all $\sfn$. Using the inductive hypothesis on $\rv{\leq}$, all $\sfn\sfm_2$'s are sums of standard monomials, hence also $\sfm$ is sum of standard monomials in $\sfS(\mA)/\calI$.
}
\end{proof}

As a corollary we have the following result.

\begin{corollary}\label{cor:generatedDegreeTwo} If all weakly standard monomials are standard \rv{and the ideal $\calR$ of relations is homogeneous for the total degree of monomials, then $\calR$} is generated in degree two.
\end{corollary}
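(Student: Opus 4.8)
The plan is to reduce the statement directly to Theorem~\ref{thm:minimallyNonStandard} by analyzing what a minimally non-standard monomial can look like under the hypothesis that every weakly standard monomial is standard. Recall that a formal monomial $\sfm=\sfa_1\sfa_2\cdots\sfa_N$ is minimally non-standard if it is itself non-standard but every proper subword $\sfa_{i_1}\sfa_{i_2}\cdots\sfa_{i_k}$ (with $k<N$) is standard. First I would observe that the degree-one monomials (the elements of $\mA$) are trivially standard, so a minimally non-standard monomial has degree at least two. The key claim is: under the hypothesis of the corollary, a minimally non-standard monomial has degree \emph{exactly} two. Granting this claim, the ideal $\calR$ is generated by the straightening relations of the minimally non-standard monomials (Theorem~\ref{thm:minimallyNonStandard}), each of which relates a degree-two monomial to a linear combination of standard monomials; since $\calR$ is assumed homogeneous for the total degree, the standard monomials appearing on the right-hand side are also of degree two, so these are genuine degree-two elements of $\calR$, and hence $\calR$ is generated in degree two.

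So the heart of the argument is the claim. I would argue by contradiction: suppose $\sfm=\sfa_1\sfa_2\cdots\sfa_N$ is minimally non-standard with $N\geq 3$. The strategy is to show that $\sfm$ is in fact standard, contradicting minimal non-standardness. By the hypothesis, it suffices to show $\sfm$ is \emph{weakly} standard, i.e. that after reordering the factors so that their shapes form a non-decreasing sequence, one can arrange $\sfa_1\longleftarrow\sfa_2\longleftarrow\cdots$; and similarly that all monomials obtained from it by swaps are weakly standard. Here I would use minimality: since $N\geq 3$, each of the proper subwords of length $N-1$ is standard. In particular, the two subwords obtained by deleting the first factor and by deleting the last factor are both standard, and (if $N\geq 4$, or even $N=3$) these overlap in a subword of length $N-2$. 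One then needs a gluing argument: standardness of two overlapping subwords that together cover $\sfm$ should force standardness of $\sfm$ itself. The relevant point is that standardness of a monomial is, by the definitions in Section~\ref{sec:multiGradedSMT}, a condition that can be checked on the sequence of shapes (that they be non-decreasing after possible swaps) together with pairwise relations $\longleftarrow$ on consecutive factors across all swap-reorderings; since swaps act on adjacent pairs and the relation $\longleftarrow$ restricted to each $\mA_i$ is transitive, a chain of relations recorded in two overlapping subwords assembles into a chain for the whole monomial.

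The step I expect to be the main obstacle is precisely this gluing/assembly argument, and in particular handling the interaction between the swap maps $\phi_{i,j}$ and the overlap. One must check that the reorderings of $\sfm$ by swaps are compatible with the reorderings of its length-$(N-1)$ subwords — i.e. that deleting a factor commutes appropriately with performing swaps — so that weak standardness of all swap-images of the subwords really does yield weak standardness of all swap-images of $\sfm$. The condition $\phi_{i,j}\phi_{j,i}=\Id$ and the fact that a swap only touches an adjacent pair are what make this local-to-global passage work: a swap in $\sfm$ that does not involve, say, $\sfa_N$ is literally a swap in the subword $\sfa_1\cdots\sfa_{N-1}$, and conversely every swap-image of $\sfm$ can be reached through intermediate monomials each of which, restricted to a suitable length-$(N-1)$ window, is a swap-image of a subword and hence weakly standard. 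Once this bookkeeping is in place, the contradiction is immediate and the corollary follows.
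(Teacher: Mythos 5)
Your overall skeleton is the same as the paper's: invoke Theorem~\ref{thm:minimallyNonStandard}, show that under the hypothesis every minimally non-standard monomial has degree two, and then use homogeneity of $\calR$ to conclude. But the crucial step --- minimally non-standard $\Rightarrow$ degree two --- is not actually established in your proposal, and the route you sketch for it is the wrong one. You try to deduce standardness of $\sfm=\sfa_1\cdots\sfa_N$ by gluing the \emph{standardness} of the two overlapping length-$(N-1)$ subwords, which forces you into the bookkeeping about whether deleting a factor commutes with performing swaps; you yourself flag this as ``the main obstacle'' and leave it open. That local-to-global passage is delicate (a swap of the pair $\sfa_{N-1}\sfa_N$ is invisible to the subword $\sfa_1\cdots\sfa_{N-1}$, so weak standardness of all swap-images of the subwords does not obviously control all swap-images of $\sfm$), and, more to the point, it is entirely unnecessary.

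The argument the paper uses, and the one you should use, exploits the \emph{degree-two} proper subwords instead. If $N\geq 3$, minimality gives that each adjacent pair $\sfa_i\sfa_{i+1}$ ($i=1,\dots,N-1$) is a proper subword, hence a standard formal monomial, hence in particular weakly standard, i.e.\ $\sfa_i\longleftarrow\sfa_{i+1}$. Concatenating these relations shows $\sfa_1\longleftarrow\sfa_2\longleftarrow\cdots\longleftarrow\sfa_N$, so $\sfm$ is weakly standard; the hypothesis of the corollary then upgrades this to standard, contradicting non-standardness. No analysis of swap-images of subwords is needed, because the hypothesis ``weakly standard $\Rightarrow$ standard'' does all that work. (You actually write that ``it suffices to show $\sfm$ is weakly standard'' but then do not pursue it, instead asking also for weak standardness of all swap-images, which is precisely what the hypothesis lets you avoid.) A second, minor, gap: your assertion that the standard monomials on the right-hand side of a degree-two straightening relation are themselves of degree two needs a one-line justification --- take the degree-two homogeneous component $R_2$ of the relation $R$; homogeneity of $\calR$ puts $R_2\in\calR$, and $R-R_2$ is then a linear combination of standard monomials lying in $\calR$, hence zero by their linear independence in $A\simeq\sfS(\mA)/\calR$.
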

\begin{proof}\rv{First of all we show that for a minimally non-standard monomial $\sfm=\sfa_1\sfa_2\cdots\sfa_N$ we have $N=2$. Indeed suppose $N\geq3$, then $\sfa_i\sfa_{i+1}$ is standard for all $i=1,2,\ldots,N-1$. So $\sfa_1\longleftarrow\sfa_2\longleftarrow\cdots\longleftarrow\sfa_N$ and $\sfm$ is weakly standard, hence it is standard.

We know that $\calR$ is generated by the straightening relations of the minimally non-standard monomials by the previous theorem. We have just seen that minimally non-standard monomials have degree $2$, so $\calR$ is generated by the straightening relations of non-standard monomials of total degree $2$. Let $\sfm$ be such a monomial, let
$$
R\doteq\sfm-\sum_\sfn a_\sfn\sfn
$$
be its straightening relations and finally let
$$
R_2\doteq\sfm-\sum_\sfn a_\sfn\sfn
$$
where the sum is over all standard monomials $\sfn$ of total degree $2$. The ideal $\calR$ is homogeneous, hence $R_2\in\calR$; but then $R'\doteq R_2 - R\in\calR$ and $R'$ is a sum of standard monomials. This is clearly possible only if $R'=0$ since the standard monomials are linearly independent in $A\simeq\sfS(\mA)/\calR$. This shows that $R=R_2$ is an homogeneous relation of total degree $2$ and completes the proof.
}
\end{proof}
}

\rev{
In the last part of this section we see how a degeneration for $A$ may be constructed using the \rrev{straightening} relations (see also \cite{chirivi_thesis} for further details about this kind of degeneration in the language of LS algebras). Suppose we have a \emph{valuation}: a map $\delta:\mA\longrightarrow\mN$ such that, when extended to monomials \rrev{by} $\delta(\sfm\sfn)=\delta(\sfm)+\delta(\sfn)$ for all $\sfm,\,\sfn$, we have $\delta(\sfm)\leq\delta(\sfn)$ if $\sfm\rv{\leq}\sfn$.

For an integer $n$, let $K_n$ be the ideal of $A$ generated by those monomials $\sfm$ such that $\delta(\sfm)\geq n$ and consider the Rees algebra
$$
\calA \doteq \cdots\oplus At^2\oplus At\oplus A\oplus K_1t^{-1}\oplus K_2t^{-2}\oplus\cdots
$$
as a subalgebra of $\mk[t,t^{-1}]\otimes A$. This algebra is a torsion-free $\mk[t]$--module, hence it is a flat $\mk[t]$--algebra. \rrev{For $a\in\mk$ let $\calA_a\doteq\calA/(t-a)$ be the fiber over $a$. Notice that we have an action of $\mk^*$ on $\calA$ given by $\lambda\cdot t = \lambda t$, for all $\lambda\in\mk^*$; hence isomorphisms $\calA_a\longrightarrow\calA_{\lambda^{-1}a}$ between the fibers. In particular all \emph{generic fibers}, i.e.~$\calA_a$ with $a\neq0$, are isomorphic to $\calA_1\simeq A$. On the other hand} the \emph{special fiber} $\calA_0=\calA/(t)$ is isomorphic to the associated graded algebra
$$
A/K_1\oplus K_1/K_2\oplus K_2/K_3\oplus\cdots
$$
\rrev{We may now state our deformation result.}
\begin{theorem}\label{thm:degeneration} \rv{Let $(A,\varphi_{i,j},\longleftarrow,\rv{\leq})$ be a standard monomial theory with straightening relations for the ring $A$ and let $\delta$ be a valuation as above. Then} there exists a flat $\mk^*$--equivariant degeneration of $A$ to $\calA_0$ whose all generic fibers are isomorphic to $A$ while the special fiber $\calA_0$ is isomorphic to the quotient of the \rrev{symmetric} algebra $\sfS(\mA)$ by the ideal generated by \rrev{the relations}
$$
\sfm'\quad - \sum_{\delta(\sfm)=\delta(\sfm')}a_\sfm\sfm
$$
\rrev{where $\sfm'$ is a minimally non-standard monomial and $\sfm'\, - \sum_{\sfm}a_\sfm\sfm$ is its straightening relation.}
\end{theorem}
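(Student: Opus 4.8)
The plan is to verify that the Rees construction $\calA$ already does all the work, and that the only remaining point is to identify the special fiber $\calA_0$ with the quotient $\sfS(\mA)/\calJ$, where $\calJ$ is the ideal generated by the indicated relations. First I would record what has already been established in the paragraph preceding the statement: $\calA$ is a torsion-free, hence flat, $\mk[t]$-algebra sitting inside $\mk[t,t^{-1}]\otimes A$; the $\mk^*$-action $\lambda\cdot t=\lambda t$ makes it $\mk^*$-equivariant and gives isomorphisms of all generic fibers with $\calA_1\simeq A$; and $\calA_0=\calA/(t)\simeq \operatorname{gr}_\delta A \doteq A/K_1\oplus K_1/K_2\oplus\cdots$. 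So the whole content of the theorem is the explicit presentation of $\operatorname{gr}_\delta A$.

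To identify $\operatorname{gr}_\delta A$, I would argue as follows. The natural surjection $\sfS(\mA)\longrightarrow A$ together with the filtration $K_\bullet$ induces a surjection $\sfS(\mA)\longrightarrow \operatorname{gr}_\delta A$, where a monomial $\sfm$ is sent to the class of its image in $K_{\delta(\sfm)}/K_{\delta(\sfm)+1}$; this is a ring map precisely because $\delta$ is additive on monomials. It remains to compute its kernel. On one hand, each defining relation $\sfm'-\sum_{\delta(\sfm)=\delta(\sfm')}a_\sfm\sfm$ clearly lies in the kernel: it is the "leading term" part, with respect to $\delta$, of the straightening relation $\sfm'-\sum_\sfm a_\sfm\sfm$, and the latter holds in $A$ while the omitted terms have $\delta(\sfm)>\delta(\sfm')$ by compatibility of $\delta$ with $\leq$, hence vanish in the degree-$\delta(\sfm')$ graded piece. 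So $\calJ\subseteq\ker$. For the reverse inclusion I would use a dimension count: in $\sfS(\mA)/\calJ$ every monomial can be rewritten, using the given relations, as a $\mk$-linear combination of standard monomials — this is the same induction on $\leq$ used in the proof of Theorem \ref{thm:minimallyNonStandard}, noting that Theorem \ref{thm:minimallyNonStandard} lets us reduce to relations of minimally non-standard monomials, and that $\calJ$ contains exactly the $\delta$-homogeneous leading parts of those. Thus the standard monomials span $\sfS(\mA)/\calJ$. On the other hand the standard monomials remain linearly independent in $\operatorname{gr}_\delta A$: a standard monomial $\sfm$ maps to its class in $K_{\delta(\sfm)}/K_{\delta(\sfm)+1}$, and since distinct standard monomials form a $\mk$-basis of $A$ and $K_{n}$ is spanned by those monomials $\sfn$ with $\delta(\sfn)\ge n$ (which, after straightening, are combinations of standard monomials of $\delta$-value $\ge n$), no nontrivial combination of standard monomials of a fixed $\delta$-value can land in the next step of the filtration. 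Hence the composite $\sfS(\mA)/\calJ\twoheadrightarrow\operatorname{gr}_\delta A$ sends a spanning set to a linearly independent set, so it is an isomorphism and $\ker=\calJ$.

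Finally I would assemble the pieces: $\calA$ is the desired family over $\Spec\mk[t]$, flat by torsion-freeness, $\mk^*$-equivariant by the rescaling action, with generic fiber $A$ and special fiber $\calA_0=\operatorname{gr}_\delta A\simeq\sfS(\mA)/\calJ$, which is the stated presentation. The step I expect to be the only real obstacle is the linear independence of standard monomials in $\operatorname{gr}_\delta A$ — equivalently, showing that the filtration $K_\bullet$ is "compatible" with the standard basis in the sense that $K_n$ is spanned by the standard monomials $\sfm$ with $\delta(\sfm)\ge n$. This follows from the straightening process together with the compatibility $\delta(\sfm)\le\delta(\sfn)$ whenever $\sfm\leq\sfn$, but one must check that straightening a monomial of $\delta$-value $\ge n$ never produces a standard monomial of strictly smaller $\delta$-value, which is exactly what that compatibility guarantees; everything else is formal.
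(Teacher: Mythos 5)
Your proof is correct, and the two facts you isolate --- that the standard monomials span $\sfS(\mA)/\calJ$ (by the same induction on the order $\leq$ as in Theorem \ref{thm:minimallyNonStandard}, applied to the leading-term relations) and that they remain linearly independent in the associated graded algebra $\calA_0\simeq\bigoplus_n K_n/K_{n+1}$ because straightening can only increase the value of $\delta$ --- are precisely the content on which the paper's argument also rests. The packaging differs: the paper introduces the auxiliary algebra $B$, the quotient of $\sfS(\mA,t)$ by the $t$--deformed relations $\sfm'-\sum_\sfm a_\sfm\sfm\, t^{\delta(\sfm)-\delta(\sfm')}$, shows that the assignment $\sfa\longmapsto \sfa t^{-\delta(\sfa)}$ identifies $B$ with the whole Rees algebra $\calA$ (injectivity being justified by transporting the standard monomial theory to $\calA$), and then reads off the special fiber by setting $t=0$; you instead present only the special fiber, identifying $\calA_0$ directly with $\sfS(\mA)/\calJ$ by a spanning-plus-independence count. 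Your route proves exactly the statement asked for and makes fully explicit the independence step (that $K_{n+1}$ lies in the span of the standard monomials of $\delta$--value at least $n+1$), which the paper leaves implicit in its injectivity claim; the paper's route yields in addition a presentation by generators and relations of the total family $\calA$ over $\mk[t]$, of which the fiber description is the specialization. Both arguments are sound, and yours is, if anything, the more self-contained for the statement as given.
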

\begin{proof} We have only to prove the last part about $\calA_0$. \rrev{Consider the symmetric algebra $\sfT\doteq\sfS(\mA,t)$ with indeterminates the set of generators $\mA$ and the parameter $t$. Let $B$ be the quotient of $\sfT$ by the ideal generated by the modified straightening relations
$$
\sfm'-\sum_{\sfm}a_\sfm\sfm t^{\delta(\sfm)-\delta(\sfm')}
$$
for all $\sfm'$ minimally non-standard. We may define a map $\sfT\longrightarrow\mk[t,t^{-1}]\otimes A$ by $\mA\ni\sfa\longmapsto\sfa t^{-\delta(\sfa)}$ and by $t\longmapsto t$. It is clear that this map is well defined also on $B$. Its image is $\calA$ by definition of this last algebra. Moreover it is an injective map since $\calA$ has a \rv{standard monomial theory on a multiset} defined in terms of that of $A$ with any monomial $\sfm$ replaced by $\sfm t^{-\delta(\sfm)}$.

It is now clear that $\calA_0\simeq B\ristretto_{t=0}$ is as claimed in the statement of the theorem.
}
\end{proof}
}

\section{Standard monomial theory for multicones over flag varieties}\label{sec:SMTMulticoneFlag}


In this section we apply the abstract construction of the previous section to the multicone over a flag variety; this is the motivating example for the above general setting of a \rv{standard monomial theory on a multiset}.

Let $G$ be a simply connected semisimple algebraic group over an algebraically closed field $\mk$ of characteristic $0$ and let $T\subset B\subset G$ be a maximal torus and a Borel subgroup of $G$, respectively. Denote by $W$ the Weyl group and by $\Lambda\supset\Lambda^+$ the lattice of integral weights and the monoid of dominant weights associated to the choice of $T$ and $B$. For a dominant weight $\lambda$ denote by $W_\lambda \subseteq W$ its stabilizer and by $W^\lambda\subseteq W$ the set of minimal \rv{length} representatives of the cosets $W/W_\lambda$; denote, moreover, by $\leq$ the Bruhat order on $W$ and on $W^\lambda$.

Now let $P\supseteq B$ be a parabolic subgroup of $G$ stabilizing \rev{the line generated by a highest weight vector $v_\lambda$ in the irreducible $G$--module $V_\lambda$ of highest weight $\lambda$}. We have a natural map $G/P\ni gP\longmapsto [g\cdot v_\lambda]\in\mP(V_\lambda)$ \rev{from the flag variety $G/P$ to the projective space over $V_\lambda$}. We use this map to define the line bundle $\calL_\lambda$ on $G/P$ as the pull-back of $\calO(1)$ on $\mP(V_\lambda)$; we denote \rrev{the space of its sections by} $\Gamma(G/P,\calL_\lambda)$. Notice that, as $G$--\rev{modules}, we have $\Gamma(G/P,\calL_\lambda)\simeq V_\lambda^*$, the dual of $V_\lambda$. In the sequel we denote by $\lambda^*$ the unique dominant weight such that $V_{\lambda^*}\simeq V_\lambda^*$ as $G$--modules.

%
%

In \cite{littelmann_paths} Littelmann associated to a fixed piece-wise linear path $\pi:[0,1]_\mQ\longrightarrow \Lambda\otimes\mQ$ completely contained in the dominant Weyl chamber and ending in $\lambda\doteq\pi(1)\in\Lambda^+$, a set $\mathbb{B}_\pi$ of piece-wise linear paths. The set $\mathbb{B}_\pi$ gives the character of the irreducible module $V_\lambda$:
$$
\Char V_\lambda = \sum_{\eta\in\mathbb{B}_\pi}e^{\eta(1)}
$$
In particular, if we start with the path $\pi_\lambda:t\longmapsto t\lambda$ we obtain the set $\mathbb{B}_\lambda$ of \emph{L-S paths} \rev{of \emph{shape} $\lambda$}; they may be combinatorially described in the following way.

Given a pair $\tau < s_\beta\tau$ of adjacent elements in $W^\lambda$, where $s_\beta$ is the \rv{reflection} with respect to the \rv{positive} root $\beta$, we define \rv{the positive integer} $f_\lambda(\tau,s_\beta\tau)\doteq\langle\tau(\lambda),\check{\beta}\rangle$. Further, we extend $f_\lambda$ to \rv{comparable} \rev{pairs} $\sigma<\tau$ in $W^\lambda$ by choosing a chain $\sigma=\tau_1<\tau_2<\cdots<\tau_u=\tau$ of adjacent elements in $W^\lambda$ and defining $f_\lambda(\sigma,\tau) \doteq \gcd\{f_\lambda(\tau_1,\tau_2),\ldots,f_\lambda(\tau_{u-1},\tau_u)\}$; indeed such $\gcd$ is independent of the chain used to compute it (see \cite{raika}).

A pair $\eta\doteq(\tau_1,\tau_2,\ldots,\tau_r;a_0,a_1,\ldots,a_r)$, where $\tau_1<\tau_2<\cdots<\tau_r$ is a sequence of comparable elements of $W^\lambda$ and $0=a_0<a_1<\cdots<a_r=1$ are rational numbers, is an L-S path if the integral condition $a_if_\lambda(\tau_i,\tau_{i+1})\in\mN$ holds for all $i=1,2,\ldots,r-1$. The pair $\eta$ is identified with the path
$$
\pi_{(a_r-a_{r-1})\tau_r(\lambda)}*\pi_{(a_{r-1}-a_{r-2})\tau_{r-1}(\lambda)}*\cdots*\pi_{(a_1-a_0)\tau_1(\lambda)}
$$
where we denote concatenation of paths by $*$. The set $\supp\eta\doteq\{\tau_1,\tau_2,\ldots,\tau_r\}$ is called the \emph{support} of the path $\eta$.

Let $\calW$ be the set of words in the alphabet $W$ and denote by $N_\lambda$ the least common multiple of the image of $f_\lambda$; we define the \emph{word} $w(\eta)$ of the L-S path $\eta=(\tau_1,\tau_2,\ldots,\tau_r;a_0,a_1,\ldots,a_r)$ as $w(\eta)\doteq\tau_1^{N_\lambda(a_1-a_0)}\cdots\tau_r^{N_\lambda(a_r-a_{r-1})}$; this will be needed in the sequel \rev{to define} \rv{an order on monomials}.

The set $\mathbb{B}_\lambda$ not only describes the character of the irreducible $G$--module $V_\lambda$, but also, in \cite{littelmann_contracting}, Littelmann associates a section $\rev{\sfp_\pi}$ in $\Gamma(G/P,\calL_\lambda)$ to an L-S path $\rrev{\pi}\in\mathbb{B}_\lambda$. \rev{The set $\mA_\lambda\doteq\{\sfp_\pi\,|\,\pi\in\mB_\lambda\}$ of these sections, of shape $\lambda$,} may be used to construct a standard monomial theory as follows. For more details about the combinatorics of L-S paths and their application to the geometry of Schubert varieties one may see \cite{chirivi_ls}.

Given two dominant weights $\lambda,\mu$ \rev{we} lift the Bruhat order on $W^\lambda$ and $W^\mu$ to $W^\lambda\sqcup W^\mu$ by defining $W^\lambda\ni\sigma\leq\tau\in W^\mu$ if there exist $\sigma',\tau'\in W$ such that $\sigma'W_\lambda=\sigma W_\lambda$, $\tau'W_\mu=\tau W_\mu$ and $\sigma'\leq\tau'$ with respect \rv{to} the Bruhat order of $W$. For details we refer to \cite{chirivi_multicone}. Notice that this lift is still the Bruhat order if $\lambda$ and $\mu$ have the same stabilizer in $W$. We use this order to define a relation $\underset{\lambda,\mu}\longleftarrow$ on pairs in $\mB_\lambda\times\mB_\mu$ as follows: we set
$$
 \pi\underset{\lambda,\mu}\longleftarrow \eta \quad \text{ if }\pi\in\mB_\lambda,\,\eta\in\rev{\mB_\mu},\,\text{ and }\max\supp\pi\leq\min\supp\eta
$$
Notice that if $\lambda = \mu$, then $\underset{\lambda,\lambda}\longleftarrow$ is a transitive and antisymmetric relation.

Recall that the set of pairs $(\pi, \eta) \in \mB_\lambda \times \mB_\mu$ such that $\pi\underset{\lambda,\mu}\longleftarrow\eta$ is in natural bijection with the basis $\mB_{\pi_\lambda*\pi_\mu}$ as proved by Littelmann in \rv{Theorem~10.1 in} \cite{littelmann_plactic}. Further the two \rev{bases} $\mathbb{B}_{\pi_\lambda*\pi_\mu}$ and $\mathbb{B}_{\pi_\mu*\pi_\lambda}$ of the \rev{module} $V_{\lambda+\mu}$ are in bijection by a unique isomorphism of crystal graphs \rv{as follows at once by Theorem~6.3 in \cite{littelmann_paths}}. So we have the diagram
$$
\begin{array}{ccc}
\{\pi*\eta\,|\,\pi\underset{\lambda,\mu}\longleftarrow\eta\} & \longrightarrow & \mB_{\pi_\lambda*\pi_\mu}\\
 & & \downarrow\\
\{\eta'*\pi'\,|\,\eta'\underset{\mu,\lambda}\longleftarrow\pi'\} & \longleftarrow & \mB_{\pi_\mu*\pi_\lambda}\\
\end{array}
$$
\rev{and, for $\mB_\lambda\ni\pi\rv{\underset{\lambda,\mu}\longleftarrow}\eta\in\mB_\mu$, we define $\phi_{\lambda,\mu}(\sfp_\pi\sfp_\eta)\doteq\sfp_{\eta'}\sfp_{\pi'}$ if $(\eta',\pi')$ corresponds to $(\pi,\eta)$ \rv{under} the composition of the above three bijections.}

\rev{Finally let $\lambda_1,\lambda_2,\ldots,\lambda_n$ be dominant weights stabilized by the parabolic subgroup $P$. As seen before for a pair of dominant weights, we define an order $\leq$ on $W^{\lambda_1}\sqcup W^{\lambda_2}\sqcup\cdots\sqcup W^{\lambda_n}$ by: $W^{\lambda_i}\ni\sigma\leq\tau\in W^{\lambda_j}$ if $i\leq j$ and there exist $\sigma',\,\tau'\in W$ such that $\sigma'W_{\lambda_i}=\sigma W_{\lambda_i}$, $\tau' W_{\lambda_j}=\tau W_{\lambda_j}$ and $\sigma'\leq\tau'$ with respect to the Bruhat order on $W$. Further we refine this order to a total order $\leq_t$ such that $W^{\lambda_i}\ni\sigma\leq_t\tau\in W^{\lambda_j}$ only for $i\leq j$.}

\rv{
Let $\leq_{t,\lex}$ be the lexicographic order on the set of words $\mathcal{W}$ defined as follows: $\tau_1\cdots\tau_u\leq_{t,\lex}\sigma_1\cdots\sigma_v$ if there exists $i$ such that $\tau_j=\sigma_j$ for all $j=1,2,\ldots,i$ and either $i=v$ or $i<u,v$ and $\tau_{i+1} <_t \sigma_{i+1}$.
}

\rv{For an L-S path section $\sfp_\pi$ in $\mA\doteq\mA_{\lambda_1}\sqcup\mA_{\lambda_2}\sqcup\cdots\sqcup\mA_{\lambda_n}$ let $\lambda(\sfp_\pi)=\lambda_i$ if $\lambda_i$ is the shape of $\pi$ and extend the shape to monomials as $\lambda(\sfp_{\pi_1}\sfp_{\pi_2}\cdots \sfp_{\pi_u}) = \lambda(\pi_1)+\lambda(\pi_2)+\cdots+\lambda(\pi_u)$.}

\rv{On formal monomials} in \rev{$\mA$} we define the following \rv{order}:
$$
\sfp_{\eta_1}\sfp_{\eta_2}\cdots \sfp_{\eta_u} \rv{\leq} \sfp_{\epsilon_1}\sfp_{\epsilon_2}\cdots \sfp_{\epsilon_v}
$$
if
\rv{
\begin{itemize}
\item[-] the shapes of the two monomials are equal, and
\item[-] $w(\eta_1)w(\eta_2)\cdots w(\eta_u)\leq_{t,\lex}w(\epsilon_1)w(\epsilon_2)\cdots w(\epsilon_v)$.
\end{itemize}
}
Notice \rv{that this order, for our purpose,} is equivalent to the one used in Section~7 of \cite{LLM} (see Proposition~32 in \cite{chirivi_ls} and Proposition~2.1 in \cite{chirivi_multicone}) \rv{since the relations we are going to see are homogeneous with respect to the shape. Further this order verifies the conditions in Section \ref{sec:multiGradedSMT} for an order on monomials.}

Finally notice that we may define a relation $\longleftarrow$ on \rev{$\mA$} by declaring $\rev{\sfp_\pi\longleftarrow\sfp_\eta}$ if $\pi\in\mB_{\lambda_i}$, $\eta\in\mB_{\lambda_j}$ and $\pi\underset{\lambda_i,\lambda_j}\longleftarrow\eta$.

Now consider the $\mk$--algebra
$$\rev{A(\lambda_1,\lambda_2,\ldots,\lambda_n)}\doteq\bigoplus\Gamma(G/P,\calL_{m_1\lambda_1+m_2\lambda_2+\cdots+m_n\lambda_n})$$
where the sum runs over all $n$--\rv{tuples} of \rv{non-negative} integers $m_1,m_2,\ldots,\rev{m_n}$. This algebra is the coordinate ring of the multicone over the partial flag variety $G/P$ mapped diagonally in $\mP(V_{\lambda_1})\times\cdots\times\mP(V_{\lambda_n})$.


\rv{
Now we need a result about the sections $\sfp_\pi$ in order to state our main theorem about standard monomial theory.

\begin{proposition}[See Proposition~7.3 in \cite{LLM}]\label{littelmann_allagrande}  If $\pi_1,\pi_2\,\ldots,\pi_N$ are L-S paths in $\mB_{\lambda_1}\sqcup\mB_{\lambda_2}\sqcup\cdots\mB_{\lambda_n}$, then $\sfp_{\pi_1}\sfp_{\pi_2}\cdots\sfp_{\pi_N} = \sum a_{\eta_1,\eta_2,\ldots,\eta_N}\sfp_{\eta_1}\sfp_{\eta_2}\cdots\sfp_{\eta_N}$, where $\sfp_{\eta_1}\sfp_{\eta_2}\cdots\sfp_{\eta_N}$ is standard and $a_{\eta_1,\eta_2,\ldots,\eta_N}\neq0$ only if $\sfp_{\pi_1}\sfp_{\pi_2}\cdots\sfp_{\pi_N}\rv{\leq} \sfp_{\eta_1}\sfp_{\eta_2}\cdots\sfp_{\eta_N}$.
\end{proposition}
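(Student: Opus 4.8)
The plan is to recognize the statement as the straightening law of the standard monomial theory on the multiset $\mA\doteq\mA_{\lambda_1}\sqcup\cdots\sqcup\mA_{\lambda_n}$ attached to $A(\lambda_1,\ldots,\lambda_n)$, and to deduce it from the ``quadratic'' case $N=2$. Only the spanning part of the theory is at stake here: one has to show that every monomial $\sfp_{\pi_1}\cdots\sfp_{\pi_N}$ is a $\mk$--linear combination of \emph{standard} monomials lying above it in $\leq$. Linear independence of the standard monomials is a separate point, to be handled by comparing their number in each multidegree with the dimension of the relevant weight space via Littelmann's character formula.

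\emph{The quadratic case.} Let $\pi\in\mathbb{B}_{\lambda_i}$ and $\eta\in\mathbb{B}_{\lambda_j}$; since $A$ is commutative we may assume $i\leq j$, so that $\sfp_\pi\sfp_\eta$ has non-decreasing shape $(i,j)$ and lies in $\Gamma(G/P,\calL_{\lambda_i+\lambda_j})\simeq V_{\lambda_i+\lambda_j}^*$. By Theorem~10.1 in \cite{littelmann_plactic}, recalled above, the comparable pairs $(\pi',\eta')$ with $\pi'\in\mathbb{B}_{\lambda_i}$, $\eta'\in\mathbb{B}_{\lambda_j}$ and $\pi'\underset{\lambda_i,\lambda_j}{\longleftarrow}\eta'$ are in bijection with $\mathbb{B}_{\pi_{\lambda_i}*\pi_{\lambda_j}}$, and the associated products $\sfp_{\pi'}\sfp_{\eta'}$ form a $\mk$--basis of $\Gamma(G/P,\calL_{\lambda_i+\lambda_j})$; moreover each such weakly standard monomial is already standard, since its only swap $\phi_{i,j}$ sends it, by construction, to a weakly standard monomial of shape $(j,i)$. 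It then remains to expand $\sfp_\pi\sfp_\eta$ in this basis and to verify the upper-triangularity
$$
\sfp_\pi\sfp_\eta=\sum_{\pi' \longleftarrow \eta'}a_{\pi',\eta'}\,\sfp_{\pi'}\sfp_{\eta'},\qquad a_{\pi',\eta'}\neq0\ \then\ \sfp_\pi\sfp_\eta\leq\sfp_{\pi'}\sfp_{\eta'},
$$
with the inequality strict whenever $\pi\centernot\longleftarrow\eta$. This is obtained by combining Littelmann's explicit construction of the path-sections $\sfp_\pi$ with the compatibility of the word order $\leq_{t,\lex}$ with multiplication of sections; it is essentially the $N=2$ case of Proposition~7.3 in \cite{LLM}, with parallel treatments in \cite{chirivi_ls} and \cite{chirivi_multicone}. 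This is the heart of the matter.

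\emph{From $N=2$ to general $N$.} Since $\leq$ admits no infinite ascending chain by property~(ii) of the order, one argues by noetherian induction on $\leq$. If $\sfm=\sfp_{\pi_1}\cdots\sfp_{\pi_N}$ is standard there is nothing to prove. Otherwise, using commutativity of $A$ to order the factors by non-decreasing shape, and using that the swaps $\phi_{i,j}$ are realized by equalities among the sections $\sfp_\pi$ (they stem from isomorphisms of crystal graphs, cf.\ Theorem~6.3 in \cite{littelmann_paths}), one rewrites $\sfm$ — without altering its value in $A$ — as a monomial in which some adjacent pair $\sfp_\rho\sfp_{\rho'}$ of non-decreasing shape fails to be weakly standard. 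This is immediate when $\sfm$ itself is not weakly standard; when $\sfm$ is weakly standard but not standard one first applies a chain of swaps destroying weak standardness and then re-sorts by shape, a routine bookkeeping which I would not spell out in detail. Straightening $\sfp_\rho\sfp_{\rho'}$ by the quadratic case and substituting, property~(i) of $\leq$ together with the strictness just quoted yields $\sfm=\sum_\sfn b_\sfn\sfn$ with $\sfn>\sfm$ for every term; the inductive hypothesis then writes each $\sfn$ in terms of standard monomials $\geq\sfn>\sfm$, which closes the induction. (A $\leq$--maximal $\sfm$ is automatically standard, for otherwise this step would produce a strictly larger monomial.)

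\emph{Main obstacle.} The crux is the quadratic upper-triangularity of the second paragraph. Everything else is formal: the reduction to $N=2$ is the usual straightening-by-adjacent-transpositions bookkeeping, harmless precisely because re-ordering by shape and applying the swap-identities are equalities in $A$ while $\leq$ is an order on monomials regarded as elements of $A$; and the combinatorial inputs — the bijection with $\mathbb{B}_{\pi_{\lambda_i}*\pi_{\lambda_j}}$ and the uniqueness of the crystal isomorphism — are due to Littelmann. Establishing the quadratic triangularity calls for the detailed interplay between the path-sections $\sfp_\pi$, the word $w(\pi)$, the integer $N_{\lambda_i}$ and the lexicographic refinement $\leq_{t,\lex}$ of the Bruhat order — in other words, for turning Littelmann's tensor-product decomposition of \cite{littelmann_paths} into an effective straightening algorithm. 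This is the substance of Proposition~7.3 in \cite{LLM}, to which we refer for the details.
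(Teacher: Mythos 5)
Your reduction from general $N$ to the quadratic case is exactly the route that the paper identifies as the ``slight inaccuracy'' in the proof of Proposition~4.1 of \cite{chirivi_multicone}, and it does not work. The concrete obstruction is that there exist minimally non-standard monomials of degree $\geq 3$: weakly standard monomials, sorted by shape, all of whose adjacent pairs are standard, yet which are not standard (the paper's own example is the tableau $24,134,2$ for $\sfS\sfL_4$ with reference shape $(2,3,1)$). For such a monomial there is no adjacent pair to which the quadratic straightening applies, so your induction has nowhere to start. Your proposed fix --- ``apply a chain of swaps destroying weak standardness and then re-sort by shape'' --- rests on the claim that the swaps $\phi_{i,j}$ are realized by equalities among the sections $\sfp_\pi$. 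That claim is false: the crystal isomorphism $\mB_{\pi_\lambda*\pi_\mu}\simeq\mB_{\pi_\mu*\pi_\lambda}$ is a bijection of combinatorial index sets, not an identity in the ring. In the same example the swap sends $(24,134)$ to $(124,34)$, while the shuffling relation $[234][14]-[134][24]+[124][34]=0$ shows that $\sfp_{24}\sfp_{134}$ and $\sfp_{124}\sfp_{34}$ are genuinely different elements of $A$. So after ``applying swaps'' you are no longer manipulating the element you set out to straighten, and the upper-triangularity with respect to $\leq$ is lost.

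The paper's proof is of a different nature: it asserts that the argument of Proposition~7.3 in \cite{LLM} (there given only for $N=2$ and $n=1$) generalizes verbatim to arbitrary $N$ and $n$, i.e.\ one reruns Littelmann's analysis directly on products of $N$ path sections rather than bootstrapping from the quadratic case. Note that the degree-two relations do generate the defining ideal (by \cite{kempf}), and one can in practice derive higher straightening relations from them as in the paper's example; but that derivation passes through multiplying relations by extra variables and is not the adjacent-pair induction you describe, and in particular the compatibility with the order $\leq$ is not automatic from the quadratic case. Your noetherian induction on $\leq$ and the treatment of monomials that are not weakly standard are fine; the gap is precisely the weakly-standard-but-not-standard case.
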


In \cite{LLM}, this proposition is stated and proved only for $N=2$ and $n=1$ (i.e. for products of two sections of the same shape). However the proof there may be verbatim generalized.

We finally have all we need to \rv{state the main result of standard monomial theory for the multicone.}

\begin{theorem}[Proposition~4.1 in \cite{chirivi_multicone}]\label{teo:SMT_multicone}
The set of generators $\mA$, the swap maps $\phi_{\lambda_i,\lambda_j}$ and the relation $\longleftarrow$ define a standard monomial theory for the algebra $A(\lambda_1,\lambda_2,\ldots,\lambda_n)$ on the multiset $\mA$. With respect to the \rv{order $\leq$}, any non-standard monomial in the $\sfp_\pi$'s has a straightening relation.
\end{theorem}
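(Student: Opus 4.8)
The plan is to establish the two assertions of the theorem in turn: that every non-standard monomial in the $\sfp_\pi$'s straightens as a $\mk$-linear combination of standard monomials each $\geq$ it in $\leq$, and that the standard monomials form a $\mk$-basis of $A \doteq A(\lambda_1,\ldots,\lambda_n)$ with pairwise distinct images. Throughout I would use that $A$ is graded by $\mN^n$, the degree of a monomial being the content of its shape, that $\longleftarrow$, the swaps $\phi_{\lambda_i,\lambda_j}$ and the order $\leq$ all respect this grading, and that the homogeneous component of multidegree $\mbm = (m_1,\ldots,m_n)$ is $\Gamma(G/P,\calL_{\mu_\mbm})$ with $\mu_\mbm \doteq m_1\lambda_1 + \cdots + m_n\lambda_n$, of dimension $\dim V_{\mu_\mbm}$; so it suffices to work one multidegree at a time. (The structural requirements of Section~\ref{sec:multiGradedSMT} — that $\longleftarrow$ is transitive and antisymmetric on each $\mA_{\lambda_i}$, that $\phi_{\lambda_i,\lambda_j}$ sends weakly standard monomials of shape $(i,j)$ to weakly standard monomials of shape $(j,i)$ and satisfies $\phi_{\lambda_i,\lambda_i}=\Id$ and $\phi_{\lambda_i,\lambda_j}\phi_{\lambda_j,\lambda_i}=\Id$, and that $\leq$ has properties i) and ii) — are immediate from the constructions preceding the statement.)

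The straightening assertion I would deduce directly from Proposition~\ref{littelmann_allagrande}: for any formal monomial $\sfp_{\pi_1}\sfp_{\pi_2}\cdots\sfp_{\pi_N}$ in the $\sfp_\pi$'s, that proposition gives an identity in $A$ expressing it as a $\mk$-linear combination of standard monomials $\sfp_{\eta_1}\cdots\sfp_{\eta_N}$ each of which is $\geq$ it, and this is exactly a straightening relation for the (commutative) monomial $\sfp_{\pi_1}\cdots\sfp_{\pi_N}$. The same proposition also shows that the standard monomials span $A$: since $\mA = \mA_{\lambda_1}\sqcup\cdots\sqcup\mA_{\lambda_n}$ generates $A$ as a $\mk$-algebra (a standard fact about multicones over flag varieties, each $\mA_{\lambda_i}$ being a basis of $\Gamma(G/P,\calL_{\lambda_i})\simeq V_{\lambda_i}^*$), the component $\Gamma(G/P,\calL_{\mu_\mbm})$ is spanned by the monomials of multidegree $\mbm$ in the $\sfp_\pi$'s, and each of these rewrites as above in terms of standard ones. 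It then remains only to bound the number of standard monomials of multidegree $\mbm$ from above by $\dim V_{\mu_\mbm}$.

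For this count I would argue as follows. A standard monomial of multidegree $\mbm$ is, by definition, the image of a weakly standard formal monomial $\sfp_{\pi_1}\longleftarrow\cdots\longleftarrow\sfp_{\pi_N}$ of the sorted shape $(\lambda_1,\ldots,\lambda_1,\lambda_2,\ldots,\lambda_n)$ (with $m_j$ entries equal to $\lambda_j$), and this assignment embeds the standard monomials of multidegree $\mbm$ into the set of weakly standard formal monomials of that fixed shape sequence. Iterating Littelmann's bijection — Theorem~10.1 of \cite{littelmann_plactic}, together with the crystal isomorphisms of Theorem~6.3 of \cite{littelmann_paths} already used above; see also \cite{littelmann_contracting} — identifies the latter set with the set $\mB_{\pi_{\lambda_1}^{*m_1}*\cdots*\pi_{\lambda_n}^{*m_n}}$ of L-S paths of the concatenated shape, whose cardinality is $\dim V_{\mu_\mbm}$ by the character formula. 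Hence there are at most $\dim V_{\mu_\mbm}$ standard monomials of multidegree $\mbm$, and together with the spanning established above this forces equality: in every multidegree the standard monomials form a $\mk$-basis of the corresponding component of $A$, and in particular they have pairwise distinct images. This completes the proof. (As a by-product one obtains that, in this example, every weakly standard formal monomial whose shape sequence is non-decreasing is automatically standard.)

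I expect the counting step to be the main obstacle. Concretely, one has to check that Littelmann's Theorem~10.1 of \cite{littelmann_plactic} iterates to arbitrary shape sequences $(i_1,\ldots,i_N)$ — that is, that weakly standard monomials of shape $(i_1,\ldots,i_N)$ are in bijection with the L-S paths of shape $\pi_{\lambda_{i_1}}*\cdots*\pi_{\lambda_{i_N}}$ — and in particular that the condition $\max\supp\pi_h\leq\min\supp\pi_{h+1}$ defining $\longleftarrow$ is exactly the one that survives concatenation of paths. A further point to be careful about is that Proposition~\ref{littelmann_allagrande} is applied to products of sections whose shape sequence need not be non-decreasing (which is the content of the remark following that proposition) and that the monomials it produces are standard, not merely weakly standard.
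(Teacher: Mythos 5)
Your reduction of the problem is sound up to the last step: granting Proposition~\ref{littelmann_allagrande}, spanning and the order condition on the coefficients are immediate, and the whole theorem comes down to showing that the number of standard monomials of multidegree $\mbm$ is at most $\dim V_{\mu_\mbm}$. (For context: the paper does not prove this theorem at all but cites Proposition~4.1 of \cite{chirivi_multicone}, merely noting that the proof there needs Proposition~\ref{littelmann_allagrande} in place of the two-factor statement; so your attempt to supply a self-contained argument is welcome, but it must stand on its own.)

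The counting step, which you yourself flag as the main obstacle, is where the argument breaks. You bound the standard monomials of multidegree $\mbm$ by the \emph{weakly} standard formal monomials of the sorted shape and then claim that Theorem~10.1 of \cite{littelmann_plactic} iterates to give a bijection between the latter set and $\mB_{\pi_{\lambda_1}^{*m_1}*\cdots*\pi_{\lambda_n}^{*m_n}}$. This iteration fails for three or more factors: for $N\geq 3$ the chain condition $\sfp_{\pi_1}\longleftarrow\cdots\longleftarrow\sfp_{\pi_N}$ is strictly weaker than membership of $\pi_1*\cdots*\pi_N$ in the connected crystal component of the highest path, and that is precisely why the stronger ``standard'' notion (stability under all swaps) is introduced. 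Your purported by-product --- that every weakly standard monomial of non-decreasing shape is standard --- is explicitly contradicted by the paper's own Example~\ref{subsectionExampleMulticoneA}: for $\sfS\sfL_4$ with reference shape $(2,3,1)$ the tableau $24,134,2$ has adapted (sorted) shape and is weakly standard but not standard, so in multidegree $(1,1,1)$ there are strictly more than $\dim V_{\omega_1+\omega_2+\omega_3}=64$ weakly standard monomials, and your upper bound is false. What is true (and what \cite{chirivi_multicone} actually uses) is that weakly standard coincides with standard only when the shape is sorted so that the stabilizers $W_{\lambda_{i}}$ are nested (e.g.\ a decreasing reference shape in type $\sfA$), in which case the weakly standard chains do form a path model; the general count is then obtained by transporting standard monomials of arbitrary adapted shape to that monotone case via the swap maps, using that the swaps define an action of the symmetric group and preserve standardness. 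Without this extra combinatorial input --- the heart of Proposition~4.1 of \cite{chirivi_multicone} --- your proof does not establish linear independence.
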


We stress that for this standard monomial theory we cannot apply Corollary \ref{cor:generatedDegreeTwo} since the notion of weakly standard and standard do not coincide. Indeed, in general, there exist minimally non-standard monomials of degree $3$ (see Example \ref{subsectionExampleMulticoneA} below). However, as proved in Proposition~2 in \cite{kempf}, the ideal of relations is still generated in degree two. In particular, it is generated by the straightening relations for the non-standard monomials of degree $2$.

We point out that in the proof of Proposition~4.1 in \cite{chirivi_multicone} there is a slight inaccuracy. Only the proof that the relations of degree $2$ are straightening relations is correct as given there; indeed, in \cite{chirivi_multicone}, Proposition~7.3 in \cite{LLM} is used while one needs its generalization in Proposition \ref{littelmann_allagrande}.
}

\rv{
\subsection{Example of multicones for type $\sfA$}\label{subsectionExampleMulticoneA}

We see an example of a standard monomial theory on a multiset. We develop first some combinatorics about rows and tableaux and then we apply these to the multicones over partial flag varieties for $\sfS\sfL_{\ell+1}$. More details and all proofs about the combinatorics may be found in \cite{chirivi_jeuDeTaquin}, while one may see \cite{chirivi_multicone} about the application to the multicones. For the particular type of multicones we are going to discuss, our standard monomial theory is completely explicit.

We fix a positive integer $\ell$ and denote by $\sfT(k)$ the set of increasing sequences $1\leq i_1<i_2<\cdots<i_k\leq\ell+1$ of integers; we call such a sequence a \emph{row} while $k$ is its \emph{shape}.

We define a (partial) order $\longleftarrow$ on the set of rows in the following way: if $R=i_1i_2\cdots i_k\in\sfT(k)$ and $S=j_1j_2\cdots j_h\in\sfT(h)$ then $R\longleftarrow S$ if either (i) $k\geq h$ and $i_1\leq j_1$, $i_2\leq j_2$, $\ldots$, $i_h\leq j_h$ or (ii) $k\leq h$ and $i_1\leq j_{h-k+1}$, $i_2\leq j_{h-k+2}$, $\ldots$, $i_k\leq j_h$. This order may simply be described as follows. Align the two rows $R,\,S$ to the left if $R$ has shape greater than or equal to that of $S$ or to the right otherwise, then compare the numbers in the columns: if these numbers are non-decreasing then $R\longleftarrow S$. For example $135\longleftarrow 14$ while $45\centernot\longleftarrow135$.

Suppose we are given two rows $R,\,S$, with $R\longleftarrow S$, of shapes $k\leq h$, respectively. One can prove that the set of subrows $S'$ of $S$ of shape $k$ such that $R\longleftarrow S'$ has a minimum $S^0$ for the order $\longleftarrow$. In the same way, there exists a maximum $R^0$ for $\longleftarrow$ in the set of rows $R'$, of shape $h$, containing $R$ and such that $R'\longleftarrow S$. Further $R^0\longleftarrow S^0$ and $R^0\cup S^0=R\cup S$ counting entries with multiplicities.

Analogously, if the two rows $R,\,S$ have shapes $k\geq h$, respectively, we define $R^0$ \rv{as} the $\longleftarrow$--maximal subrow of $R$, of shape $h$, which is $\longleftarrow$--less or equal to $S$, and $S^0$ \rv{as} the minimum of the rows containing $S$, of shape $k$, and $\longleftarrow$--greater or equal to $R$. Also in this case we have $R^0\longleftarrow S^0$ and $R^0\cup S^0=R\cup S$ with multiplicities.

So we have defined a \emph{swap map} $\phi_{k,h}$ for pairs of comparable rows by defining: $\phi_{k,h}(R,S)=(R^0,S^0)$. We have $\phi_{k,k}=\Id$ and $\phi_{h,k}\phi_{k,h}=\Id$. For example $\phi_{2,4}:(25,1346)\longmapsto(1245,36)$.

A sequence $T=R_1,R_2,\ldots, R_N$ of rows is called a \emph{(skew) tableau}; we think to its rows as aligned by the above recipe, each one with respect to the following one. The \emph{shape} of $T$ is the sequence $(k_1,k_2,\ldots,k_N)$ of the shapes of its rows and we denote by $\sfT(k_1,k_2,\ldots,k_N)$ the set of all tableaux of a given shape. The tableau $T$ is \emph{weakly standard} if $R_1\longleftarrow R_2\longleftarrow\cdots\longleftarrow R_N$, i.e. if the numbers in its columns are non-decreasing. For example $24,134,2$ is a weakly standard tableau.

Suppose in the above tableau $T$ we have $R_i\longleftarrow R_{i+1}$, we may then define a new tableau $\tau_i(T)\doteq R_1,\ldots,R_{i-1},R_i^0,R_{i+1}^0,R_{i+2},\ldots,R_N$ where $(R_i^0,R_{i+1}^0)=\phi_{k_i,k_{i+1}}(R_i,R_{i+1})$; the tableau $\tau_i(T)$ has shape $(k_1,\ldots,k_{i-1},k_{i+1},k_i,k_{i+2},\ldots,k_N)$. In particular $\tau_i(T)$ is defined if $T$ is weakly standard.

Now suppose that $T$ is weakly standard and that also $\tau_i(T)$ is weakly standard, then we may define $\tau_j(\tau_i(T))$ by swapping two other rows. If all tableaux \rv{that} we obtain by applying the $\tau_i$'s to $T$ are weakly standard, then we say that $T$ is a \emph{standard tableau}. For example $24,134,3$ is a standard tableau while $24,134,2$ is \emph{not} a standard tableau, indeed if we swap its first two rows we have $124,34,2$ which is not weakly standard.

We denote by $\sfS\sfT(k_1,k_2,\ldots,k_N)$ the set of standard tableaux of shape $(k_1,k_2,\ldots,k_N)$. Further let
$$
\sfS\sfT\{k_1,k_2,\ldots,k_N\}\doteq\bigcup\sfS\sfT(k_{\tau(1)},k_{\tau(2)},\ldots,k_{\tau(N)})
$$
where $\tau$ runs over all permutations in the symmetric group $\sfS_N$. The maps $\tau_i$'s are defined on $\sfS\sfT\{k_1,k_2,\ldots,k_N\}$ and they give an action of $\sfS_N$ on this set.


Let us fix for the sequel a shape $\sfk = (\bar k_1,\bar k_2,\ldots,\bar k_n)$, called the \emph{reference shape}. We say that a shape $(k_1,k_2,\ldots,k_N)$ is \emph{adapted} to the reference shape $\sfk$ if: (i) for all $1\leq i\leq N$ there exists $j_i$ such that $k_i=\bar k_{j_i}$ and (ii) $j_1\leq j_2\leq\cdots\leq j_N$. In the same way, we say that a tableau has \emph{adapted shape} if its shape is adapted to $\sfk$.

Given two tableaux $T=R_1,R_2,\ldots,R_N$ and $T'=R_1',R_2',\ldots,R'_N$ with adapted shapes we define $T\rv{\leq} T'$ if:
\begin{itemize}
\item[-] $T$ and $T'$ have the same shape, and
\item[-] either $T=T'$ or there exists an index $j$ such that
\begin{itemize}
\item[i)] $R_1=R_1',\,R_2=R_2',\ldots,R_j=R_j',\,R_{j+1}\neq R_{j+1}'$ and
\item[ii)] $R_{j+1}\longleftarrow R_{j+1}'$.
\end{itemize}
\end{itemize}

Now we see how the combinatorial data seen above is linked to the standard monomial theory. Let $G=\sfS\sfL_{\ell+1}(\mC)$, $B$ its Borel subgroup of upper triangular matrices, let $\omega_1,\omega_2,\ldots,\omega_{\ell}$ be the fundamental weights numbered as in \cite{bourbaki} and let $P\supseteq B$ be a parabolic subgroup stabilizing the fundamental weights $\omega_{\bar k_i}$ for $i=1,2,\ldots,n$, where $\sfk=(\bar k_1,\bar k_2,\ldots,\bar k_n)$ is the above fixed reference shape.

We want to describe a standard monomial theory for the $\mC$--algebra
$$
A\doteq\bigoplus\Gamma(G/P,\calL_{m_1\omega_{\bar k_1}+m_2\omega_{\bar k_2}+\cdots+m_{n}\omega_{\bar k_n}})
$$
where the sum runs over all $n$--tuples of non-negative integers $m_1,m_2,\ldots,m_n$. This is the same algebra previously studied in this section once we choose $\lambda_1=\omega_{\bar k_1},\,\lambda_2=\omega_{\bar k_2},\ldots,\lambda_n=\omega_{\bar k_n}$.

The set of rows $\sfT(k)$ is in bijection with the L-S paths of shape $\omega_k$, so we have a map $\sfT(k)\ni R\longmapsto\sfp_R\in\Gamma(G/P,\calL_{\omega_k})$. These sections $\sfp_R$'s are nothing else but the classical Pl\"ucker coordinates for the Grassmannian of $k$--dimensional subspaces in $\mC^{\ell+1}$ pulled back to $G/P$.

The order $\longleftarrow$ for rows is the same order defined in the general part in this section by lifting the Bruhat order. The swap maps $\phi_{h,k}$ correspond to the general swap maps (for L-S path sections) and may be defined on sections by: $\phi_{\omega_k,\omega_k}(\sfp_R,\sfp_S)=(\sfp_{R^0},\sfp_{S^0})$ if $\sfT(k)\ni R\longleftarrow S\in\sfT(h)$ and $\phi_{k,h}(R,S)=(R^0,S^0)$.

Notice that if the reference shape is decreasing then a tableau \revtwo{with adapted shape} is standard if and only if it is weakly standard. This is clear since the set of weakly standard tableaux of decreasing shape are a particular instance of a path model. So, in order to check that a weakly standard tableau $T$ is standard one may use the swap maps and make its shape decreasing, obtaining a new tableau $T'$, and then check that $T'$ is (weakly) standard, i.e. check whether the entries of $T'$ are not decreasing in the columns. Further, if $T'$ is standard, then it is uniquely determined by $T$ since the swap maps give an action of the symmetric group. Of course all of this is true also for increasing reference shapes.

If $\sfp_{R_1}\sfp_{R_2}\cdots\sfp_{R_N}$ is a (commutative) monomial in $A$, we may always assume that the shape of the tableau $R_1,R_2,\ldots,R_N$ is adapted to $\sfk$; so we compare monomials in the $\sfp_R$'s via the order \rv{$\leq$} on the corresponding adapted tableaux. This order corresponds to the order defined via the lexicographic order $\leq_{t,\lex}$ on words associated to L-S paths in our situation.

So the standard monomial theory for $A$ may be seen in terms of rows and tableaux. But also its straightening relations are quite explicit. In the sequel we give a set of generators for the ideal $\calR$ of relations among the generators $\sfp_R$'s, i.e. $\calR$ is the kernel of the natural map from the polynomial algebra
$$
\sfS[\sfp_R\,|\,R\textrm{ a row of shape in }\sfk]
$$
to $A$.

We need a slight generalization of these generators: let $R=i_1i_2\cdots i_k$ be any sequence of integers in $\{1,2,\ldots,\ell+1\}$, we define $[R]$ either as $0$ if the entries of $R$ are not distinct, or as $(-1)^\sigma\sfp_{R'}$ if $\sigma$ is the unique permutation of $1,2,\ldots,k$ such that $R'=i_{\sigma(1)},i_{\sigma(2)},\ldots,i_{\sigma(k)}$ is a row.

Now suppose that $(k,h)$, with $k>h$, is a shape adapted to the reference shape and suppose we have two rows
$$
\begin{array}{cccccccccccc}
R & = & i_1 & i_2 & \ldots & i_{t-1} & u_{t+1} & u_{t+2} & \cdots & \cdots & \cdots & u_{k+1}\\
S & = & u_1 & u_2 & \ldots & u_{t-1} & u_t & j_1 & j_2 & \cdots & j_{h-t}\\
\end{array}
$$
with shapes $k,\,h$, respectively, such that $i_1<u_1,\,i_2<u_2,\ldots,i_{t-1}<u_{t-1}$ but $u_{t+1}>u_t$ so that $T\doteq R,S$ is not a standard tableau; we say that $t$ is the \emph{index of violation} of standardness in $T$. Then the polynomial
$$
\sum (-1)^\sigma[i_1,\ldots,i_{t-1},u_{\sigma(t+1)},\ldots,u_{\sigma(k+1)}][u_{\sigma(1)},\ldots,u_{\sigma(t)},j_1,\ldots,j_{h-k}]
$$
where the sum runs over a set of representatives for the quotient $\sfS_{k+1}/\sfS_t\times\sfS_{k+1-t}$, is in $\calR$. Such a relation is called a \emph{shuffling relation}. The case of decreasing adapted shape $k<h$ results in similar shuffling relations.

Notice that a shuffling relation may not be a straightening relation; indeed other non-standard tableaux besides $T$ (corresponding to $\sigma\,\,=$ the identity permutation) may appear. But any other non-standard tableau appearing in this relation has index of violation greater than $t$. So we may use a finite number of shuffling relations and reach eventually a straightening relation for $T$.

Finally, since standard and weakly standard coincide for tableaux with two rows, and since, by \cite{kempf}, we know that $\calR$ is generated in degree $2$, we conclude that the shuffling relations generate $\calR$.

Let us see an example with $\ell = 3$ and reference shape $\sfk=(2,3,1)$. The tableau $T\doteq24,134,2$ is weakly standard but not standard (as already seen above). So $[24][134][2]=\sfp_{24}\sfp_{134}\sfp_{2}$ is a linear combination of sections associated to standard tableaux.

We have the following shuffling relations (in particular they are also straightening relations):
$$
\begin{array}{ccccccc}
[234][14] & - & [134][24] & + & [124][34] & = & 0\\
\end{array}
$$
$$
\begin{array}{ccccccc}
[34][2] & - & [24][3] & + & [23][4] & = & 0\\
\end{array}
$$
If we multiply the first one by $[2]$, use the second and move $T$ to the left hand side, we have
$$
\begin{array}{ccccccc}
T & = & [14][234][2] & + & [24][124][3] & - &[23][124][4]\\
\end{array}
$$
As one can easily check, the three tableaux in the right hand side are all standard; hence we have obtained the straightening relation for $T$.

Now let $\calR_0$ be the ideal of $\sfS[\sfp_R]$ generated by $\sfp_{R_1}\sfp_{R_2}\cdots\sfp_{R_N}$ for all non-standard tableaux $R_1,R_2,\ldots,R_N$. The quotient $A_0=\sfS[\sfp_R] / \calR_0$ is called the \emph{discrete algebra} for the multicone with reference shape $\sfk$. Notice that it is possible to define a certain valuation and, using Theorem \ref{thm:degeneration}, degenerate $A$ to $A_0$. In particular, in our example above with $\ell=3$ and $\sfk=(2,3,1)$, the ideal $\calR_0$ is no more generated in degree $2$ since the tableau $T$, for example, is weakly standard but not standard. The same is true for any non-decreasing or non-increasing reference shape.

\subsection{A multicone for type $\sfA_1\times\sfA_1$}\label{subsectionAnotherExample}

Now we see a very simple example of multicone which will be used in the next Section. 
Let $G\doteq\sfS\sfL_2(\mC)\times\sfS\sfL_2(\mC)$, a group of type $\sfA_1\times\sfA_1$, 
let $\omega,\,\omega'$ be the two fundamental weights and take $\lambda_1\doteq\omega,\,\lambda_2=\omega+\omega'$ and $\lambda_3=\omega'$ 
(we are using the symbols $\lambda_1,\,\lambda_2,\,\lambda_3$ with the same meaning as in the main part of this Section). The multicone for this example is $\sfS\sfL_2(\mC)/B\times\sfS\sfL_2(\mC)/B$, which is clearly isomorphic to $\mP^1\times\mP^1$. We will describe the combinatorics and the straightening relations using tableaux with rows made of two boxes with some boxes filled with the integers $1$ and $2$.

In particular here is the correspondence between rows and L-S paths for $\mB_{\lambda_i}$:
$$
\begin{array}{ccccc}
\Young{1\hfill} & \longmapsto & \pi_{\omega} & \in & \mB_{\lambda_1}\\
\Young{2\hfill} & \longmapsto & \pi_{-\omega} & \in & \mB_{\lambda_1}\\
\Young{11} & \longmapsto & \pi_{\omega+\omega'} & \in & \mB_{\lambda_2}\\
\Young{21} & \longmapsto & \pi_{-\omega+\omega'} & \in & \mB_{\lambda_2}\\
\Young{12} & \longmapsto & \pi_{\omega-\omega'} & \in & \mB_{\lambda_2}\\
\Young{22} & \longmapsto & \pi_{-\omega-\omega'} & \in & \mB_{\lambda_2}\\
\Young{\hfill1} & \longmapsto & \pi_{\omega'} & \in & \mB_{\lambda_3}\\
\Young{\hfill2} & \longmapsto & \pi_{-\omega'} & \in & \mB_{\lambda_3}\\
\end{array}
$$
In the sequel we write $\Young{i}$ where $i$ may be $1$, $2$ or nothing and we denote pairs of rows and tableaux as stacked rows of two boxes. Notice that by $\Young{ij}$ we mean one of the rows in the above list, so that the row $\Young{\hfill \hfill}$ is not allowed in our context.

We define the following relation on boxes:
$$
\Young{i}\leq\Young{j}
$$
for all pairs $(i,j)$ with $i,\,j$ equal to $1$,$2$ or nothing, but the pair $i=2$ and $j=1$.
The relation $\longleftarrow$, defined by lifting the Bruhat order, corresponds to the following relation via the above bijection from rows to paths:
$$
\Young{ij}\longleftarrow\Young{hk}\quad\quad \textrm{if and only if}\quad\quad \Young{i} \leq \Young{h} \,\,\textrm{ and }\,\, \Young{j} \leq \Young{k}
$$
Hence a tableau 
$$
T={\lower30pt\hbox\Young{\ione\jone,\itwo\jtwo,\cdot\cdot,\cdot\cdot,\cdot\cdot,\ienne\jenne}}
$$
is weakly standard if and only if: in each column, consecutive integers do not decrease.

The crystal graph isomorphisms are very easy to compute and, denoting by $i,j,k$ integers in $\{1,\,2\}$ (so that $\Young{i}$, $\Young{j}$ and $\Young{k}$ are not empty boxes), the resulting swap maps are:
$$
\begin{array}{ccc}
\YYoung{ij,k\hfill} & \longleftrightarrow & \YYoung{i\hfill,kj}\\
\\
\YYoung{\hfill i,jk} & \longleftrightarrow & \YYoung{ji,\hfill k}\\
\\
\YYoung{\hfill i,j\hfill} & \longleftrightarrow & \YYoung{j\hfill,\hfill i}\\
\end{array}
$$
The swap maps may be summarized in words as: vertically exchange the empty boxes with the filled ones. Hence a tableau $T$ as above is standard if in each column the integer entries, read out by skipping the empty boxes, are non-decreasing. In particular, if in a tableau the empty boxes are only $j_1,j_2,\ldots,j_h$ and $i_k,i_{k+1},\ldots,i_N$ for certain $h,\,k$, then it is standard if and only if it is weakly standard.

The coordinate ring of the multicone is
$$
A\doteq\bigoplus_{n_1,n_2,n_3}\Gamma(\sfS\sfL_2(\mC)/B\times\sfS\sfL_2(\mC)/B,\calL_{(n_1+n_2)\omega+(n_2+n_3)\omega'})
$$
and we have a surjective map from the polynomial algebra $\sfS(\Young{ij})$ with indeterminates indexed by the rows \Young{ij}, to $A$ whose kernel $\calR$ is generated by the polynomials
$$
\begin{array}{ccc}
\YYoung{2\hfill,1i} - \YYoung{1\hfill,2i}\\
\\
\YYoung{21,12} - \YYoung{11,22}\\
\\
\YYoung{i2,\hfill 1} - \YYoung{i1,\hfill 2}\\
\end{array}
$$
where $i\in\{1,\,2\}$.
}

\section{Standard monomial theory for the Cox ring of a wonderful variety}\label{sec:SMTCox}

Let $X$ be a wonderful $G$--variety with (unique) closed $G$--orbit $Y$, and \rv{let $P \supseteq B$ be the parabolic subgroup} such that $Y \simeq G/P$.
By \cite{luna_spherical}, $X$ is \textit{spherical}, i.e.\ it possesses an open $B$--orbit, say $B\cdot x_0 \subset X$.
Since $B\cdot x_0$ is affine, $G\cdot x_0 \setminus B\cdot x_0$ is a union of finitely many $B$--stable \rv{divisors} and we denote by $\grD$ the set of their closures in $X$:
\[ \grD \doteq \{ D \subset X \st D \text{ is a $B$--stable prime divisor, } D \cap G\cdot x_0 \neq \vuoto \}.\]
The elements of $\grD$ are called the \textit{colors} of $X$.

Denote by $B^-$ the opposite Borel subgroup of $B$ and let $y_0 \in Y$ be the unique $B^-$--fixed point of $X$. The normal space \rv{$\mathrm T_{y_0} X/\mathrm T_{y_0} Y$} of $Y$ in $X$ at $y_0$ is a multiplicity-free $T$--module. The elements of the set
\[ \grS \doteq \left\{ \textrm{$T$--weights of } \mathrm T_{y_0} X/\mathrm T_{y_0} Y \right\}\]
are called the \textit{spherical roots} of $X$. \rv{If $\sigma\in\Sigma$, there exists a unique $G$--stable \rv{divisor} $X_\sigma$ of $X$ such that the weight of $T$ on $\mathrm T_{y_0} X/\mathrm T_{y_0} X_\sigma$ is $\sigma$. This gives a natural correspondence between the set $\Sigma$ and the irreducible boundary divisors of $X$.} 

Recall that every line bundle on $X$ \rev{and} on $Y$ has a unique $G$--linearization.
As a group, $\Pic(X)$ is freely generated by the equivalence classes of line bundles $\calL_D\doteq\calO(D)$, for $D\in\grD$ (see \cite[Proposition~2.2]{brion_picard}). For all $E\in\mZ\grD$, the associated line bundle $\calL_E\doteq\calO(E)$ is globally generated, respectively ample, if and only if $E$ is a non-negative, respectively positive, combination of colors. \rv{Notice that $\mZ\Sigma$ is a sublattice of $\mZ\Delta$.}

The restriction of line bundles to the closed orbit induces a map $\grl : \Pic(X) \lra \grL$; given $E \in \mZ \grD$ we set $\grl_E \doteq \grl(\calL_E)$ in such a way that $\Gamma(Y,\rrev{\calL_E\ristretto_Y})\simeq V_{\lambda_E}^*$ and\rev{, moreover,} we set $V_E\doteq \rev{V_{\lambda_E}^*}$ for short. \rev{(Hence $\rrev{\calL_E\ristretto_Y}\simeq\calL_{\lambda_E}$ where this last line bundle is defined in the previous section.)} Moreover, in particular, $\Gamma(X,\calL_E)$ contains a copy of $\rev{V_E}$ and, \rv{since $X$ is spherical} the decomposition of \rv{the $G$--module} $\Gamma(X,\calL_E)$ is multiplicity-free.

%
%

%
%


If $\grg \doteq \sum a_\grs \grs\in \mN\grS$, we denote by $s^\grg \in \grG(X,\calL_{X^\grg})$ a section whose divisor is equal to $\rv{X_\gamma\doteq\sum a_\grs X_\grs}$; notice that this section is $G$--invariant. If $E,F \in \mZ \grD$ are such that $F-E \in \mN\grS$, then we write $E \leq_\grS F$. If $E\in \mN \Delta\rev{, F\in\mZ\Delta}$ and $E\leq_\grS F$ the multiplication by $s^{F-E}$ induces a $G$--equivariant map from the sections of $\calL_E$ to the sections of
$\calL_F$, in particular we have $s^{F-E}\rev{V_E}\subseteq \Gamma(X,\calL_F)$. Moreover

\begin{proposition}[{\cite[Proposition~2.4]{brion_picard}}]	\label{prop: decomposizione sezioni}
Let $F \in \mZ\grD$, then 
\[
	\grG(X,\calL_F)\quad = \bigoplus_{E \in \mN \grD \st E \leq_\grS F} s^{F-E} \rev{V_E}.
\]
\end{proposition}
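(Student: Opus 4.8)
The plan is to read off both sides from their $B$--eigensections. Since $X$ is spherical, $\Gamma(X,\calL_F)$ is multiplicity--free as a $G$--module, and in such a module the $B$--eigenvectors are exactly the highest weight vectors of the (pairwise non--isomorphic) irreducible constituents, one line for each; so the decomposition is pinned down once one identifies this set of $B$--eigensections together with their weights. To parametrize them I would use that $F=\sum_{D\in\grD}n_D D$ is a $B$--stable divisor: $\calL_F=\calO(F)$ carries the tautological rational section $\mathbf{1}_F$ with $\operatorname{div}\mathbf{1}_F=F$, which spans a $B$--stable line since $F$ is $B$--stable, and $s\mapsto s/\mathbf{1}_F$ identifies $\Gamma(X,\calL_F)$ with $\{f\in\mk(X)\st\operatorname{div}f+F\geq0\}$ in a way matching $B$--eigensections with $B$--eigenfunctions. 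For a $B$--eigenfunction $f$ the divisor $\operatorname{div}f$ is $B$--stable, hence supported on the $B$--stable prime divisors of $X$ --- the colors $D\in\grD$ and the boundary divisors $X_\grs$, $\grs\in\grS$ --- so $\operatorname{div}f+F\geq0$ amounts to $v_D(f)\geq -n_D$ for all $D$ and $v_\grs(f)\geq0$ for all $\grs$.

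With this in hand I would prove the inclusion $\subseteq$. Given a $B$--eigensection $f\mathbf{1}_F$ as above, set $E\doteq\sum_{D\in\grD}\bigl(v_D(f)+n_D\bigr)D\in\mN\grD$ and $a_\grs\doteq v_\grs(f)\geq0$, so that $\operatorname{div}(f\mathbf{1}_F)=E+\sum_\grs a_\grs X_\grs$. This divisor is linearly equivalent to $F$; comparing classes in $\Pic(X)\cong\mZ\grD$, in which $[X_\grs]=\grs\in\mZ\grS$, gives $\sum_\grs a_\grs\grs=F-E$, hence $F-E\in\mN\grS$, i.e.\ $E\leq_\grS F$, and moreover $\sum_\grs a_\grs X_\grs=X_{F-E}$. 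Thus $f\mathbf{1}_F$ and $s^{F-E}\mathbf{1}_E$ are sections of $\calL_F$ with the same divisor $E+X_{F-E}$, so they agree up to a nonzero scalar since $X$ is complete and irreducible. Granting that $\mathbf{1}_E$ lies in the copy of $V_E$ inside $\Gamma(X,\calL_E)$ (see the last paragraph), this shows every $B$--eigensection of $\calL_F$ lies in $s^{F-E}V_E$ for some $E\in\mN\grD$ with $E\leq_\grS F$. The reverse inclusions $s^{F-E}V_E\subseteq\Gamma(X,\calL_F)$ are already recorded in the text, and combined with multiplicity--freeness they give $\Gamma(X,\calL_F)=\sum_E s^{F-E}V_E$.

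To see that this sum is direct, I would observe that $s^{F-E}V_E$ is an irreducible $G$--submodule, $s^{F-E}$ being $G$--invariant, with highest weight vector $s^{F-E}\mathbf{1}_E$, whose divisor $E+X_{F-E}$ has color part exactly $E$. Hence for $E\ne E'$ these highest weight vectors have different divisors, so their ratio is a non--constant $B$--eigenfunction and they carry different $B$--weights; by multiplicity--freeness the $s^{F-E}V_E$ are then pairwise non--isomorphic, and the sum is direct. This yields the asserted decomposition.

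The step that requires genuine input about wonderful varieties, and the one I expect to be the real obstacle, is the claim that the tautological section $\mathbf{1}_E$ of $\calO(E)$ lies in the copy of $V_E\subseteq\Gamma(X,\calL_E)$ for $E\in\mN\grD$; equivalently, that $\mathbf{1}_E\ristretto_Y\ne0$, i.e.\ that no color occurring in $E$ contains the closed orbit $Y$. Here I would appeal to the local structure of $X$ at the $B^-$--fixed point $y_0\in Y$: $X$ has an affine open neighborhood of $y_0$ that is stable under a parabolic opposite to a standard one and meets none of the colors (see \cite{BL}), so $y_0\notin D$ for every $D\in\grD$ and therefore $\mathbf{1}_E\ristretto_Y\ne0$. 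Then $\mathbf{1}_E\ristretto_Y$ is a nonzero $B$--eigensection of the irreducible $G$--module $\Gamma(Y,\calL_E\ristretto_Y)\simeq V_E$, hence a highest weight vector of $V_E$; since restriction to $Y$ is $G$--equivariant, the $G$--submodule of $\Gamma(X,\calL_E)$ generated by $\mathbf{1}_E$ is the irreducible module with that highest weight, i.e.\ the copy of $V_E$ inside $\Gamma(X,\calL_E)$. This would complete the proof.
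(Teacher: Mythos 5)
The paper offers no proof of this proposition: it is quoted directly from Brion \cite[Proposition~2.4]{brion_picard}, and the argument you give is essentially Brion's own --- classify the $B$--eigensections of $\calL_F$ through their $B$--stable divisors, which are supported on the colors and the boundary divisors, and use multiplicity--freeness together with the local structure theorem to match each eigensection with $s^{F-E}$ times the highest weight line of $V_E$. Your reconstruction is correct, including the two points where sphericality and wonderfulness genuinely enter (that $[X_\grs]=\grs$ under $\Pic(X)\simeq \mZ\grD$, forcing $F-E\in\mN\grS$, and that no color contains the closed orbit, so the canonical section of $\calO(E)$ restricts to a highest weight vector of $\Gamma(Y,\calL_E\ristretto_Y)$ and hence generates the copy of $V_E$), so I see no gap.
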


\rv{Since $\Pic(X)\simeq\mZ\grD$ is a free lattice, the space}
$$
C(X) \doteq\bigoplus _{D\in \mZ\grD} \Gamma(X,\calL_D)
$$
is a ring; in analogy with the toric case $C(X)$ is called the \emph{Cox ring} of $X$. The ring $C(X)$ was studied in \cite{CM_ring} and \cite{CLM} in the case of a wonderful symmetric variety (where it is called respectively the \textit{ring of sections} of $X$ and the \textit{coordinate ring} of $X$), and in \cite{brion_cox} in the case of a wonderful variety (where it is called the \textit{total coordinate ring} of $X$).

Since $X$ is irreducible, \rv{by Proposition \ref{prop: decomposizione sezioni},} $C(X)$ is generated as a $\mk$--algebra by the sections $s^\grs$\rv{, for $\sigma\in\Sigma$}, and by the modules $\rev{V_D} \subseteq \Gamma(X,\calL_D)$ for $D\in \Delta$. It follows that $C(X)$ is a quotient of the \rrev{symmetric algebra}
$$
	\rev{\sfS(X)} \doteq \mk[s_1, \ldots, s_r] \otimes \mathsf{S}\Big(\bigoplus_{D \in \grD} \rev{V_D} \Big)
$$
where we fix an ordering $\grS = \{\grs_1, \ldots, \grs_r\}$ and we set $s_i \doteq s^{\grs_i}$ for short. Further, notice that the quotient of $C(X)$ by the ideal generated by the sections $s_1, \ldots, s_r$ is isomorphic to the coordinate ring of a multicone over the flag variety $Y \simeq G/P$, that is
$$
	\rev{C(Y)\doteq A(\lambda_{D_1},\lambda_{D_2},\ldots,\lambda_{D_q})} = \bigoplus_{D \in \mN \grD} \Gamma(Y,\calL_D\ristretto_{Y}) \simeq \bigoplus_{D \in \mN \grD} \rev{V_D}.
$$
where $\grD = \{D_1, \ldots, D_q\}$ is any fixed ordering of $\grD$. Therefore we have surjective maps
$$
	\rev{\sfS(X)} \lra C(X) \lra \rev{C(Y)},
$$
The rings $\rev{\sfS(X)}$, $C(X)$ and $\rev{C(Y)}$ all have natural $\mZ\grD$--gradings, and the previous maps are morphisms of $\mZ\grD$--graded $G$--algebras.

By Theorem \ref{teo:SMT_multicone} we have a standard monomial theory with straightening relations for $\rev{C(Y)}$. 
Our aim is \rev{to extend it to} a standard monomial theory for the Cox ring $C(X)$, and deduce a degeneration result for such a ring. 
\revi{A description of the ideal $\calI_X$ defining $C(X)$ as a quotient of $\sfS(X)$ is given in terms of straightening relations of our standard monomial theory.}

Given $D \in \grD$ we denote by $\mB_D \doteq \mB_{\lambda_D}$ the set of L-S paths of shape $\grl_D$ for short. Given $\pi \in \mB_D$, let $\sfx_\pi \in \rev{V_D} \subset \grG(X, \calL_D)$ be the unique section such that $\sfx_\pi\ristretto_Y = \sfp_\pi$ and let
$$
	\mA_D \doteq \{\sfx_\pi \st \pi \in \mB_D\}\subset \rev{V_D}.
$$
Then $\mA_D$ is a basis of $\rev{V_D}\subseteq\Gamma(X,\calL_D)$. Further, define
$$
	\mA_\grS \doteq \{s_i \st i=1,\dots,r\}\text{, }\quad \mA_\grD \doteq \rev{\bigsqcup_{D \in \grD}} \mA_D\quad\textrm{ and }\mA_X \doteq \mA_\grS \rev{\sqcup} \mA_\grD.
$$
In particular, $\rev{\sfS(X)}$ is the \rrev{symmetric} algebra in the \rrev{indeterminates} $\mA_X$. \revi{The multiset of our standard monomial theory is $\mA_X=\mA_\Sigma\sqcup \bigsqcup_{D \in \Delta} \mA_D$ so that if $\sfx_\pi \in \mA_\grD$, its \textit{shape} is the unique $D \in \grD$ such that $\sfx_\pi \in \mA_D$ and the shape of $s_i$  is $\Sigma$. \revtwo{Recall that in the definition of a standard monomial theory, we have to fix an order on the possible shapes.} Here we have already fixed a total order on $\Delta$ and we extend it by declaring $\Sigma<D$ for all $D\in \Delta$.}

Let $\mB_\grD \doteq \rev{\bigsqcup_{D \in \grD}}\mB_D$ and set
$$
	\mA_Y \doteq \{\sfp_\pi : \pi \in \mB_\grD\},
$$
which is naturally identified with the subset $\mA_\grD \subset \mA_X$ via the bijection $\sfx_\pi \longmapsto \sfp_\pi  = \sfx_\pi\ristretto_Y$.

By Theorem \ref{teo:SMT_multicone} we have a standard monomial theory for $\rev{C(Y)}$. We denote by $\sfM(Y) \subset \sfS(\mA_Y) \simeq \sfS(\mA_\grD)$ the set of monomials in the coordinates $\mA_Y$, and by $\SM(Y) \subset \sfM(Y)$ the subset of standard monomials. In particular, using the bijections $\mA_D \simeq \mA_{\grl_D}$, for all $D, D' \in \grD$ we have swap maps $\phi_{D,D'}$, a relation $\longleftarrow$ \rev{on $\mA_\Delta$} and an \rv{order $\leq$} on $\sfM(Y)$ as defined in the previous section.

First we extend the relation $\longleftarrow$ \rev{to $\mA_X$} by declaring $s_1\longleftarrow s_2\longleftarrow\cdots\longleftarrow s_r$ and $s_i \longleftarrow \sfx_\pi$, $\sfx_\pi \longleftarrow s_i$ for all $i=1,2,\ldots,r$ and all $\pi\in\mB_\Delta$. Next we extend the swap maps by
\begin{align*}
	\phi_{\grS, D}(s_i, \sfx_\pi) \doteq (\sfx_\pi, s_i) \qquad \qquad
	\phi_{D,\grS}(\sfx_\pi, s_i) \doteq (s_i, \sfx_\pi)
\end{align*}
for all $i=1,2,\ldots,r$ and all $\pi \in \mB_\Delta$.

\rv{
Let $\sfm \doteq s^\grg \sfx_{\pi_1} \cdots \sfx_{\pi_N}$ be a monomial\rv{; then} $\nu(\sfm) \doteq \grg$ is called the \textit{vanishing} of $\sfm$
\rv{. Further if} $D_i \in \grD$ is the shape of $\sfx_{\pi_i} \in \mA_\grD$, we define the \revi{\textit{Picard weight}} of $\sfm$ as
$$
\grg + \sum_{i=1}^N D_i,
$$
namely the degree of $\sfm$ with respect to the $\mZ\grD$--grading.
}

Finally, we extend the order $\leq$ to monomials in $\mA_X$: if $\sfm_1,\sfm_2$ are two monomials in $\sfS(\mA_{\grD}) \simeq \sfS(\mA_Y)$ and if $\gamma_1,\,\gamma_2\in\mN\Delta$, then we set $s^{\gamma_1}\sfm_1\rv{\leq} s^{\gamma_2}\sfm_2$ if:\rv{
\begin{itemize}
\item[-] the \revi{Picard weights} of $s^{\gamma_1}\sfm_1$ and of $s^{\gamma_2}\sfm_2$ are equal, and
\item[-] either $\gamma_1<_\Sigma\gamma_2$ or $\grg_1 = \grg_2$ and $\sfm_1\ristretto_Y \rv{\leq} \sfm_2\ristretto_Y$ (with respect to the order $\leq$ defined in Section \ref{sec:SMTMulticoneFlag}).
\end{itemize}
}
We denote by $\sfM(X) \subset \sfS(X)$ the set of the monomials in the indeterminates $\mA_X$, endowed with the \rv{order $\leq$}.


The inclusion $\mA_Y \longhookrightarrow \mA_X$ defines a \rrev{shape-preserving} bijection between $\sfM(Y)$ and the subset of $\sfM(X)$ of the monomials $\sfm$ such that $\nu(\sfm) = 0$. \rev{Given} $\sfn \in \sfM(Y)$ we denote by $\wt \sfn \in \sfM(X)$ the corresponding monomial, so we have $\wt \sfn \ristretto_Y = \sfn$ \rev{and, in particular, $\wt\sfp_\pi=\rrev{\sfx_\pi}$}. Conversely, given a monomial $\sfm \in \sfM(X)$ we may define a monomial $\ol\sfm \in \sfM(X)$ with $\nu(\ol\sfm) = 0$ by setting $\ol\sfm \doteq s^{-\nu(\sfm)} \sfm$. Notice that $\sfm \in \sfM(X)$ is a standard monomial if and only if $\ol\sfm \in \sfM(X)$ is a standard monomial, if and only if $\ol\sfm\ristretto_Y \in \sfM(Y)$ is a standard monomial. We denote by $\SM(X) \subset \sfM(X)$ the set of standard monomials in $\mA_X$, and if $E \in \mZ \grD$ we denote by $\SM_E(X)\subset\rev{\sfM_E}(X)$ the set of standard monomials and \rv{the set} of all monomials of \revi{Picard weight} $E$, respectively.

Following \cite[Theorem 3]{CM_ring}, we are now able to construct a standard monomial theory for the Cox ring $C(X)$ of a wonderful variety $X$.

\begin{theorem}\label{teo:SMT_cox}
\begin{itemize}
\item[i)] Given $E \in \mZ \grD$, the images of the standard monomials of \revi{Picard weight} $E$ form a basis of $\grG(X,\calL_E)$.
\item[ii)] Given a non-standard monomial $\sfm'$ the equality
\begin{align*}
	\sfm' \quad = \sum_{\sfm \in \SM(X)} a_{\sfm} \sfm
\end{align*}
guaranteed by \emph{i)} is a straightening relation in $C(X)$; that is, we have $\sfm'\rv{\leq} \sfm$ whenever $a_\sfm \neq 0$. Moreover,
\begin{align*}
\ol\sfm' \quad = \sum_{\sfm \in \SM(X), \nu(\sfm)=\nu(\sfm')} a_{\sfm} \ol{\sfm}
\end{align*}
is a straightening relation in $\rev{C(Y)}$.
	\item[iii)] \revi{The defining ideal $\calI_X \subset \rev{\sfS(X)}$ is generated by the straightening relations for the non-standard monomials of degree two.}
\end{itemize}
In particular, the set of generators $\mA_X$ together with the above defined swap maps, relation and \rv{order} define a \rv{standard monomial theory on the multiset $\mA_X$} with straightening relations for the Cox ring $C(X)$.
\end{theorem}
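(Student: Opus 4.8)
The plan is to build the theory of $C(X)$ on top of two ingredients: the standard monomial theory with straightening relations for the multicone $C(Y)$ (Theorem~\ref{teo:SMT_multicone}), and the decomposition $\grG(X,\calL_F)=\bigoplus_{E\in\mN\grD,\,E\leq_\grS F}s^{F-E}V_E$ of Proposition~\ref{prop: decomposizione sezioni}, which filters each space of sections by the $\grS$--degree of the vanishing. First I record the \emph{leading term} fact that makes restriction to $Y$ work: since $Y$ lies in all boundary divisors, the kernel of the restriction $\grG(X,\calL_F)\to\grG(Y,\calL_F\ristretto_Y)$ is $\bigoplus_{E<_\grS F}s^{F-E}V_E$, so $V_F\to\grG(Y,\calL_F\ristretto_Y)$ is an isomorphism of irreducible $G$--modules; hence $\sfx_\pi$ is well defined, and for any monomial $\sfn$ in $\mA_\grD$ of Picard weight $F$ one has $\wt\sfn=v+\sum_{E<_\grS F}s^{F-E}w_E$ with $v\in V_F$, $v\ristretto_Y=\sfn\ristretto_Y$, $w_E\in V_E$. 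All of i)--iii) then become bookkeeping with this filtration and the order $\leq$, the real difficulty being iii).

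For i), fix $E\in\mZ\grD$ and use $\grG(X,\calL_E)=\bigoplus_{F\leq_\grS E}s^{E-F}V_F$. For each such $F$ the standard monomials of $C(Y)$ of Picard weight $F$ restrict to a basis of $\grG(Y,\calL_F\ristretto_Y)\simeq V_F$; lifting and multiplying by $s^{E-F}$ yields standard monomials of $\mA_X$ of Picard weight $E$ and vanishing $E-F$, and conversely every standard monomial of Picard weight $E$ arises so. Spanning follows by induction on $|E-F|$: by the leading term formula $s^{E-F}\wt\sfn$ agrees with $s^{E-F}v$ up to terms of $\bigoplus_{F'<_\grS F}s^{E-F'}V_{F'}$, which are already spanned. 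For linear independence, from $\sum a_\sfm\sfm=0$ I choose $F_0$ maximal for $\leq_\grS$ among the $E-\nu(\sfm)$ with $a_\sfm\neq0$ and project onto the summand $s^{E-F_0}V_{F_0}$: only the monomials with $\nu(\sfm)=E-F_0$ survive, each through its leading term $s^{E-F_0}v$, and since $s^{E-F_0}$ is a non--zerodivisor and the standard monomials of $C(Y)$ are independent, those $a_\sfm$ vanish; iterate.

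For ii), write the non--standard monomial as $\sfm'=s^\grg\wt\sfn'$; then $\sfn'$ is non--standard in $C(Y)$, with straightening relation $\sfn'=\sum b_\sfk\sfk$ there (all of the same Picard weight, $\sfn'\leq\sfk$). The difference $\wt\sfn'-\sum b_\sfk\wt\sfk$ vanishes on $Y$, hence lies in strictly lower filtration steps; rewriting those $V$--components via $C(Y)$--standard monomials and iterating (the Picard weight strictly drops by elements of $\mN\grS$, so this terminates) gives $\wt\sfn'=\sum b_\sfk\wt\sfk+\sum c_\sfm\sfm$ with every $\sfm$ standard of vanishing $>_\grS 0$. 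Multiplying by $s^\grg$ gives the expansion asserted in i); the order inequality is checked term by term — the terms $s^\grg\wt\sfk$ have vanishing $\grg$ and satisfy $\sfm'\leq s^\grg\wt\sfk$ because $\sfn'\leq\sfk$, while the remaining terms have strictly larger vanishing and hence lie above $\sfm'$ in $\leq$ regardless of their $\mA_\grD$--part. Restricting to $Y$ kills all terms of vanishing $>_\grS 0$ and returns the straightening relation of $\sfn'$ in $C(Y)$, which is the second claim.

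Part iii) is the crux. By i)--ii) we have a standard monomial theory with straightening relations, so by Theorem~\ref{thm:minimallyNonStandard} it suffices to show that the straightening relation of every minimally non--standard monomial of degree $\geq 3$ lies in the ideal $\calJ\subseteq\sfS(X)$ generated by the degree--two straightening relations. Any such monomial must equal $\wt\sfn$ with $\sfn$ minimally non--standard in $C(Y)$ of degree $\geq 3$ (a nontrivial $s$--factor could be deleted, leaving a standard proper submonomial, forcing standardness). Now one invokes the input recalled after Theorem~\ref{teo:SMT_multicone} (\cite{kempf}) that the defining ideal of $C(Y)$ is generated by the degree--two straightening relations $r_{\pi,\eta}$ of the non--standard $\sfp_\pi\sfp_\eta$; by ii), the part of vanishing zero of the degree--two generator $g_{\pi,\eta}\in\calJ$ attached to $\sfx_\pi\sfx_\eta$ is precisely the lift of $r_{\pi,\eta}$, so the naive lift of $r_{\pi,\eta}$ equals $g_{\pi,\eta}$ plus a combination of standard monomials of vanishing $>_\grS 0$ and the same Picard weight. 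Substituting this into an $\sfS(\mA_\grD)$--expression of $\sfn$ minus its $C(Y)$--straightening in terms of the $r_{\pi,\eta}$, and reducing modulo $\calJ$, one finds $\wt\sfn$ congruent to a combination of lifts of $C(Y)$--standard monomials (which are $\geq\wt\sfn$ in $\leq$, indeed already standard) and of standard monomials of vanishing $>_\grS 0$ (also $>\wt\sfn$). A descending induction on $\leq$ inside each Picard degree — legitimate by the finiteness property of the order, with the trivial case being a monomial possessing a non--standard degree--two submonomial — then shows that modulo $\calJ$ every monomial is a combination of standard monomials, whence $\sfS(X)/\calJ\to C(X)$ carries a spanning set to the basis of i) and is an isomorphism, i.e.\ $\calJ=\calI_X$. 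The concluding ``in particular'' just assembles i) (distinct images forming a basis) and ii) with the already verified properties of $\leq$. I expect the whole difficulty to lie in iii): one must make sure that the $s$--correction terms created when transporting the degree--two generation of the $C(Y)$--ideal to $\calI_X$ always sit strictly higher in the order $\leq$, so that they are absorbed by the induction rather than producing genuinely new relations.
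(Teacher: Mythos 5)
Your proposal is correct and follows essentially the same route as the paper's proof: lift the straightening relations of $C(Y)$ via Theorem \ref{teo:SMT_multicone}, note that the discrepancy vanishes on $Y$ and is therefore divisible by the $s_i$'s, induct on $\leq_\grS$, and for iii) rest everything on Kempf's degree-two generation of the ideal of $C(Y)$. The only cosmetic differences are that for the linear independence in i) the paper uses a dimension count against Proposition \ref{prop: decomposizione sezioni} instead of your leading-term projection, and for iii) it works directly in $\sfS(X)/\langle J,s_1,\ldots,s_r\rangle\simeq C(Y)$ rather than routing through Theorem \ref{thm:minimallyNonStandard} and transporting the quadratic generators explicitly.
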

\begin{proof}
We prove the first two statements together. Let $\pi_1, \ldots, \pi_N \in \mB_\grD$ be such that $\sfx_{\pi_1} \cdots \sfx_{\pi_N}$ is not standard. In $A(Y)$, by Theorem \ref{teo:SMT_multicone}, we have a straightening relation
$$
	\sfp_{\pi_1} \cdots \sfp_{\pi_N} = \sum_{\sfn \in \SM(Y)} a_\sfn \sfn,
$$
where $\sfp_{\pi_1} \cdots \sfp_{\pi_N} \rv{\leq} \sfn$ for all $\sfn$ such that $a_\sfn \neq 0$.

Since $X \setminus G \cdot x_0$ is a normal crossing divisor with smooth irreducible components, a section in $C(X)$ vanishes on the closed orbit $Y$ if and only if it is in the ideal generated by the sections $s_1, \ldots, s_r$. By construction the difference $\sfx_{\pi_1} \cdots \sfx_{\pi_N} - \sum_{\sfn} a_\sfn \wt \sfn$ is homogeneous w.r.t. the $\mZ\grD$--grading, and it vanishes on $Y$. Hence we have
\begin{equation*}
	\sfx_{\pi_1} \cdots \sfx_{\pi_N} = \sum_{\sfn \in \SM(Y)} a_\sfn \wt \sfn + \sum_{\sfm \in \sfM_E(X) \st \nu(\sfm) \neq 0} a_\sfm \sfm,
\end{equation*}
where $\sfx_{\pi_1} \cdots \sfx_{\pi_N} \rv{\leq} \wt \sfn$ for all $\sfn \in \SM(Y)$ with $a_\sfn \neq 0$. 

Proceeding inductively on the partial order $\leq_\grS$, the previous equality implies that in $C(X)$ the image of every monomial $\rev{\sfm'} \in \sfM_E(X)$ may be written as the image of a sum of standard monomials $\sfm \in \SM_E(X)$ \rev{with $\sfm'\rv{\leq}\sfm$}.

%
%

Therefore the image \rev{of the standard monomials} of $\SM_E(X)$ in $C(X)$ is a set of generators for $\grG(X,\calL_E)$ as a vector space. On the other hand, by Theorem \ref{teo:SMT_multicone}, the images of the standard monomials $\sfn \in \SM(Y)$ form a basis for $\rev{C(Y)}$; hence for all $F \in \mN \grD$ the images of the standard monomials $\sfn \in \rev{\SM(Y)}$ \revi{of Picard weight $F$} form a basis for the graded component $\rev{C(Y)_F} = \rev{V_F}$. \rev{So using Proposition \ref{prop: decomposizione sezioni}}, we have
$$
\begin{array}{rcl}
\dim \grG(X,\calL_E) & = & \sum_{F \in \mN\grD \st F \leq_\grS E} \dim \rev{V_F}\quad\\
 & = & \sum_{F \in \mN\grD \st F \leq_\grS E} |\SM_F(Y)|\\
 & = & |\SM_E(X)|,
\end{array}
$$
and this finishes the proof of i) and ii).

\revi{Now, in order to prove iii), let $J$ be the ideal of $\rev{\sfS(X)}$ generated by the straightening relations for the non-standard monomials of degree two}. Clearly $J\subseteq \calI_X$ and we want to show that these two ideals are equal.

The quotient $\rev{\sfS(X)}/\langle J,s_1,\ldots,s_r\rangle$ is isomorphic to $\rev{C(Y)}$ since the relations for this last ring \rv{are} generated by the quadratic straightening relations; indeed it is generated in degree $2$ by {\cite[Proposition~2]{kempf}}. So, if $\sfm'$ is a non-standard monomial $\sfm'+\langle s_1,\ldots,s_r\rangle$ is a sum of standard monomials \rev{modulo $J$}. Hence in $\rev{\sfS(X)}/J$ the monomial $\sfm'$ is a sum of standard monomials $\sfm$ with $\nu(\sfm)=0$ plus $s_1y_1+\cdots+s_ry_r$ for some homogeneous elements $y_1,\ldots,y_r$ whose \revi{Picard weights} are $<_\Sigma$ of that of $\sfm'$.

Proceeding again by induction on the \revi{Picard weight} of a non-standard monomial we see that any straightening relation is an element of $J$. So $J=\calI_X$ and the last statement of the \rv{Theorem} is proved.

\end{proof}

\rv{
\begin{remark} The results stated in the above Theorem overlap with Proposition~3.3.1 in \cite{brion_cox}. In that Proposition a description of the quadratic relations of the ring $C(X)$ is given. However in \cite{brion_cox} the standard monomial structure is not considered explicitly. Notice that even if the relations of the ring $C(X)$ are  generated in degree two, the relations of degree two are not enough to construct a standard monomial theory as we have already noticed in the final part of Example \ref{subsectionExampleMulticoneA}.
\end{remark}
}

The standard monomial theory constructed in the previous theorem is compatible with the $G$--orbit closures in $X$ \rv{in the following sense}. Recall that the subsets $I \subseteq \grS$ parametrize the $G$--orbits in $X$; that is, for every $x \in X$ there is a unique $I \subseteq \grS$ such that $\ol{G\cdot x} = \bigcap_{\grs \in \grS \setminus I} X^\grs \doteq X_I$.

The $G$--stable subvariety $X_I$ is again a wonderful variety; its set of spherical roots coincides with $I$. Given $I \subseteq \grS$, we say that a standard monomial $\sfm \in \SM(X)$ is $I$--\textit{standard} if $\nu(\sfm) \in \langle \grS \setminus I\rangle_\mZ$. We denote by $\SM^I(X)$ the set of $I$--standard monomials, and by $\SM^I_E(X)$ the set of the $I$--standard monomials of \revi{Picard weight} $E \in \mZ \grD$. 

\begin{corollary}
Given $E \in \mZ\grD$, the images of the $I$--standard monomials $\SM_E^I(X)$ are a basis for $\grG(X_I,\calL_E\ristretto_{X_I})$.
\end{corollary}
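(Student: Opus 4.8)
The plan is to deduce this from Theorem~\ref{teo:SMT_cox} applied to the wonderful variety $X_I$ itself, after checking that the standard monomial theory of $X_I$ is, verbatim, the restriction to $X_I$ of the one built for $X$. First I would collect the structural facts. The variety $X_I$ is a wonderful $G$--variety whose unique closed orbit is again $Y\simeq G/P$ (since $Y=\bigcap_{\sigma\in\Sigma}X^\sigma\subseteq X_I$) and whose spherical roots are the elements of $I$; moreover restriction of line bundles is an isomorphism $\Pic(X)\simeq\Pic(X_I)$ identifying the colors $\grD$ of $X$ with those of $X_I$. In particular the weights $\lambda_D$, hence the L--S path sets $\mB_D$, are unchanged, and Proposition~\ref{prop: decomposizione sezioni} holds for $X_I$ with the same colors but with $\leq_\Sigma$ replaced by $\leq_I$. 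Under the restriction map $C(X)\to C(X_I)$ the section $s_i$ goes to a defining section of the boundary divisor $X^{\sigma_i}\cap X_I$ of $X_I$ if $\sigma_i\in I$ and to $0$ otherwise, while $\sfx_\pi$ goes to $\sfx_\pi\ristretto_{X_I}$, which is precisely the L--S section of $X_I$ indexed by $\pi$: the bottom copy $\rev{V_D}\subset\grG(X,\calL_D)$ restricts isomorphically onto the bottom copy of $\rev{V_D}$ in $\grG(X_I,\calL_D\ristretto_{X_I})$, being already nonzero on $Y\subseteq X_I$. Thus restriction carries $\mA_\grD\,\sqcup\,\{s_i:\sigma_i\in I\}$ bijectively onto the generating multiset of $C(X_I)$, preserving shapes, and kills the remaining $s_i$.

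The point is then that the combinatorial data $(\longleftarrow,\phi,\leq)$ assigned by Theorem~\ref{teo:SMT_cox} to $X_I$ is built from $Y$ and from the chain $\{s_i:\sigma_i\in I\}$ exactly as for $X$, so it agrees with the restriction of the data of $\mA_X$; only a routine bookkeeping of definitions is needed here. Consequently, a monomial in $\mA_X$ whose vanishing lies in $\mN I$ is standard for $X$ if and only if the corresponding monomial for $X_I$ is standard for $X_I$, with the same Picard weight; that is, restriction gives a Picard-weight-preserving bijection between the $I$--standard monomials of $X$ and the standard monomials of $X_I$. Applying Theorem~\ref{teo:SMT_cox}(i) to $X_I$, the images of the standard monomials of $X_I$ of Picard weight $E$ form a basis of $\grG(X_I,\calL_E\ristretto_{X_I})$; under the above bijection these images are exactly the images of $\SM^I_E(X)$, which is the assertion.

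I expect the only real work to be the ``functoriality'' used above: that the colors, the L--S bases, the boundary sections and the whole combinatorial apparatus for $X_I$ are the restrictions of those for $X$. Once the color identification $\grD\simeq\grD(X_I)$ and the compatibility $\sfx_\pi^{X_I}=\sfx_\pi\ristretto_{X_I}$ are in hand, the remaining comparisons are immediate. As a variant avoiding the explicit combinatorics of $X_I$, one may instead prove directly that $\grG(X,\calL_E)\to\grG(X_I,\calL_E\ristretto_{X_I})$ is surjective (using Proposition~\ref{prop: decomposizione sezioni} for both $X$ and $X_I$ together with $\rev{V_F}\ristretto_{X_I}\neq0$), note that already the span of $\SM^I_E(X)$ surjects onto $\grG(X_I,\calL_E\ristretto_{X_I})$, and match dimensions via $\dim\grG(X_I,\calL_E\ristretto_{X_I})=\sum_{F\in\mN\grD,\ E-F\in\mN I}\dim\rev{V_F}=|\SM^I_E(X)|$, so that the spanning set is in fact a basis.
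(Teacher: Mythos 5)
Your closing ``variant'' is essentially the proof the paper gives: one considers the restriction $\rho:\grG(X,\calL_E)\to\grG(X_I,\calL_E\ristretto_{X_I})$, uses its surjectivity, identifies $\ker\rho$ inside the decomposition of Proposition~\ref{prop: decomposizione sezioni} for $X$ (it is the degree-$E$ part of the ideal generated by the $s^\grs$ with $\grs\in\grS\setminus I$, since $X_I$ is a transversal intersection of smooth divisors, hence the span of the summands $s^{E-F}V_F$ with $E-F\notin\mN I$, i.e.\ of the standard monomials that are not $I$-standard), and concludes. The one adjustment I would make is in your dimension count: you invoke Proposition~\ref{prop: decomposizione sezioni} ``for $X_I$'', but for $X_I$ that statement is phrased in terms of the colors of $X_I$, which are not those of $X$ (see below); it is cleaner, and it is what the paper does, to get $\dim\grG(X_I,\calL_E\ristretto_{X_I})=\dim\grG(X,\calL_E)-\dim\ker\rho=\sum_{F\,:\,E-F\in\mN I}\dim V_F$ from the decomposition for $X$ alone, once surjectivity of $\rho$ is granted.

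Your primary route, however, rests on a false premise. Restriction does \emph{not} give an isomorphism $\Pic(X)\simeq\Pic(X_I)$ identifying the colors of $X$ with those of $X_I$: the orbit closure $X_I$ is a wonderful variety with its own set of colors, and neither their number nor the Picard lattice is preserved. The paper's own example in Section~\ref{sec:SMTCox} already refutes this for $I=\vuoto$: there $X$ has three colors $D_1,D_2,D_3$ while $X_\vuoto=Y\simeq\mP^1\times\mP^1$ has two, and $D_1-D_2+D_3$ restricts to the trivial bundle on $Y$, so the restriction map on Picard groups is not even injective (for the wonderful compactification of an adjoint group it fails to be surjective instead). Consequently the standard monomial theory that Theorem~\ref{teo:SMT_cox} attaches to $X_I$ as a wonderful variety in its own right lives on a different multiset, built from $\grD(X_I)$ and the weights $\lambda_{D'}$ for $D'\in\grD(X_I)$, and the claimed shape-preserving bijection between $I$-standard monomials of $X$ and standard monomials of $X_I$ does not exist as stated. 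The first approach should therefore be abandoned in favour of the variant, which needs no combinatorics on $X_I$ beyond the identification of $\ker\rho$.
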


\begin{proof}
Let $J \doteq \grS \setminus I$. Then the restriction of sections $\rho : \grG(X,\calL) \lra \grG(X_I,\calL_E\ristretto_{X_I})$ is a surjective map, and we have
$$
\ker \rho \quad = \bigoplus_{F \in \mN\grD \st F \leq_J E} s^{E-F} \rev{V_F},
$$
where we write $F \leq_J E$ if and only if $E-F \in \mN J$. It follows that the images of the $J$--standard monomials of \revi{Picard weight} $E$ give a basis for $\ker \rho$, whereas the restrictions of the images of the $I$--standard monomials of \revi{Picard weight} $\rrev{E}$ give a basis for $\grG(X_I, \calL_E\ristretto_{X_I})$.
\end{proof}

When $X$ is the wonderful compactification of a semisimple adjoint group regarded as a homogeneous $G \times G$--variety, the above constructed standard monomial theory is even compatible with the $B \times B$--orbit closures (see \cite{appel}).

\revi{
\subsection{An example of the Cox ring for a wonderful variety}
We illustrate our theory in a simple example. We will make use of the results and conventions introduced in Section \ref{subsectionAnotherExample}.
Let $V=\mC^2$ and define
$$
X=\{
([\grf],[A],[v])\in \mP(V^*)\times \mP(\End(V))\times \mP(V) \st \grf(Av)=0\}.
$$
$X$ is a wonderful variety for the group $G=\sfS\sfL(V)\times \sfS\sfL(V)$ acting by $(g,h)(\grf,A,v)= (g\cdot\grf, gAh^{-1}, h \cdot v)$. We
choose the maximal torus  given by the diagonal matrices and the Borel subgroup given by the upper triangular matrices. We denote by $\gra$ the simple root of the first factor of $G$ and by $\gra'$ the simple root of the second factor.

The Picard group is generated by the pull-backs of the three line bundles $\calO_{\mP(V^*)}(1)$,
$\calO_{\mP(\End(V))}(1)$ and  $\calO_{\mP(V)}(1)$ and we denote by $D_1,D_2,D_3$ the associated colors, respectively. We have two spherical roots
$$
\grs_1=\gra=D_1+D_2-D_3 \mand \grs_2=\gra'=D_2+D_3-D_1.
$$
The closed orbit $Y$ in this case is $\mP^1\times \mP^1$ and the ring $C(Y)$ is the one studied in Section \ref{subsectionAnotherExample}.

The ring $C(X)$ is generated by the eight generators $\sfx_\pi$ that we denote with rows of length two as in Section \ref{subsectionAnotherExample}, 
together with $s_1$ and $s_2$. The standard monomials are the monomials of the form $s_1^as_2^b \tilde\sfm$, where $\tilde\sfm$ is a standard monomial for the ring $C(Y)$. In particular we have the same minimally non-standard monomials as in the ring $C(Y)$, and the straightening relations are given by
\begin{align*}
\YYoung{2\hfill,1i}&= \YYoung{1\hfill,2i} + s_1 \Young{\hfill i}\\
\YYoung{21,12} &= \YYoung{11,22} +s_1s_2\\
\YYoung{i2,\hfill 1} &= \YYoung{i1,\hfill 2}+s_2 \Young{i\hfill}
\end{align*}
where $i\in\{1,\,2\}$.
}

\section{Degeneration and \rev{rational singularities}}

%
%

Any straightening relation involves monomials with higher power of the sections $s_1, \ldots, s_r$\rev{.} This allows us to degenerate $\Spec C(X)$ to the product of the affine space $\mk^r$ with a multicone over the flag variety $G/P \simeq Y$. Let us see the details for such a degeneration.

\rev{
\begin{corollary}\label{cor:degeneration_cox} There exists a flat $G\times\mk^*$--equivariant degeneration $\calC$ of $C(X)$ to the ring $\mk[s_1,\dots,s_r]\otimes C(Y)$; further all generic fibers of $\calC$ are isomorphic to $C(X)$.
\end{corollary}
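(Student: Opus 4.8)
The plan is to obtain the corollary as a direct application of Theorem~\ref{thm:degeneration} to the standard monomial theory for $C(X)$ established in Theorem~\ref{teo:SMT_cox}, with respect to the valuation $\delta\colon\mA_X\longrightarrow\mN$ defined by $\delta(s_i)\doteq1$ for $i=1,\ldots,r$ and $\delta(\sfx_\pi)\doteq0$ for all $\pi\in\mB_\grD$; equivalently $\delta(\sfm)=|\nu(\sfm)|$ is the sum of the coefficients of the vanishing $\nu(\sfm)\in\mN\grS$ written in the basis $\grS=\{\grs_1,\ldots,\grs_r\}$, i.e.\ the total degree of $\sfm$ in the sections $s_1,\ldots,s_r$. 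The first step is to check that $\delta$ is compatible with the order $\leq$ on $\sfM(X)$: if $s^{\grg_1}\sfm_1\leq s^{\grg_2}\sfm_2$ then, by the definition of $\leq$ in Section~\ref{sec:SMTCox}, either $\grg_1=\grg_2$, so the two values of $\delta$ coincide, or $\grg_1<_\grS\grg_2$, so $\grg_2-\grg_1$ is a non-negative combination of the $\grs_i$ and hence $|\grg_1|\leq|\grg_2|$. Theorem~\ref{thm:degeneration} then produces a flat $\mk^*$--equivariant degeneration $\calC$ of $C(X)$ all of whose generic fibers are isomorphic to $C(X)$ and whose special fiber $\calC_0$ is the quotient of $\sfS(X)$ by the ideal generated by the modified straightening relations $\sfm'-\sum_{\delta(\sfm)=\delta(\sfm')}a_\sfm\sfm$, where $\sfm'$ ranges over the minimally non-standard monomials.

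The substance of the argument is then the identification of $\calC_0$ with $\mk[s_1,\ldots,s_r]\otimes C(Y)$, and the key observation is that no minimally non-standard monomial contains a factor $s_i$. Indeed each $s_i$ is $\longleftarrow$--comparable with every generator and has shape $\grS$, which precedes all the colors in the fixed order, so multiplying a standard monomial by a monomial in the $s_i$'s again yields a standard monomial; consequently, if $\sfm'=s^\grg\sfm''$ with $\grg\neq0$ and $\sfm''$ a monomial in the $\sfx_\pi$'s, then $\sfm'$ is standard as soon as $\sfm''$ is, while otherwise $\sfm''$ is a proper non-standard submonomial of $\sfm'$ and $\sfm'$ is not minimally non-standard. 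Since a monomial in the $\sfx_\pi$'s is standard in $\mA_X$ exactly when the corresponding monomial in the $\sfp_\pi$'s is standard in $\mA_Y$, the minimally non-standard monomials of $\mA_X$ are, via $\sfp_\pi\leftrightarrow\sfx_\pi$, precisely the minimally non-standard monomials $\sfm'=\sfx_{\pi_1}\cdots\sfx_{\pi_N}$ of the multicone $C(Y)$, and for these $\delta(\sfm')=0$.

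For such an $\sfm'$, Theorem~\ref{teo:SMT_cox}(ii) gives the straightening relation $\sfm'=\sum_{\sfn\in\SM(Y)}a_\sfn\wt\sfn+\sum_{\nu(\sfm)\neq0}a_\sfm\sfm$ in $C(X)$; the monomials $\wt\sfn$ have vanishing zero, hence $\delta=0=\delta(\sfm')$, whereas every remaining monomial has $\delta>0$. Therefore the modified straightening relation occurring in $\calC_0$ is $\sfm'=\sum_{\sfn\in\SM(Y)}a_\sfn\wt\sfn$, which by the last assertion of Theorem~\ref{teo:SMT_cox}(ii) is the straightening relation of $\sfm'$ in $C(Y)$. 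By Theorem~\ref{thm:minimallyNonStandard} applied to $C(Y)$, the straightening relations of the minimally non-standard monomials generate the defining ideal of $C(Y)$ inside $\sfS(\mA_\grD)$; since all of them involve only the $\sfx$--variables, the ideal they generate in $\sfS(X)=\mk[s_1,\ldots,s_r]\otimes\sfS(\mA_\grD)$ is $\mk[s_1,\ldots,s_r]$ tensored with the defining ideal of $C(Y)$, so that $\calC_0\simeq\mk[s_1,\ldots,s_r]\otimes C(Y)$.

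It remains to upgrade the $\mk^*$--equivariance to $G\times\mk^*$--equivariance, and this is immediate: $G$ fixes each $s_i$ and preserves each module $V_D$, so $\nu$, and hence $\delta$, is constant on the monomials appearing in $g\cdot\sfm$ for every $g\in G$; thus the ideals $K_n$ entering the Rees construction of Theorem~\ref{thm:degeneration} are $G$--stable, $G$ acts on the Rees algebra $\calC$ commuting with the $\mk^*$--action and with the projection to $\mA^1$, and the identifications of the generic fibers with $C(X)$ and of $\calC_0$ with $\mk[s_1,\ldots,s_r]\otimes C(Y)$ are $G$--equivariant. The only place where genuine care is needed is the description of the minimally non-standard monomials and of the resulting relations in the special fiber; this is where the ordering $\grS<D$ on the shapes and the comparability of the $s_i$ with every generator enter, and everything else is a formal consequence of Theorems~\ref{thm:degeneration}, \ref{thm:minimallyNonStandard} and~\ref{teo:SMT_cox}.
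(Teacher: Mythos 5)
Your proposal is correct and follows essentially the same route as the paper: the same valuation $\delta(s_i)=1$, $\delta(\sfx_\pi)=0$, an appeal to Theorem~\ref{thm:degeneration} for flatness and the generic fibers, and $G$--equivariance via the $G$--invariance of the ideal $(s_1,\ldots,s_r)$. In fact you spell out more carefully than the paper does why the special fiber is $\mk[s_1,\ldots,s_r]\otimes C(Y)$ (namely, that minimally non-standard monomials contain no $s_i$ and that the $\delta$--truncated straightening relations are exactly the straightening relations of $C(Y)$), where the paper simply cites Theorem~\ref{teo:SMT_cox}.
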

\begin{proof} We define a map $\delta:\mA_X\longrightarrow\mN$ by $\delta(\sfx_\pi)=0$ for all $\pi\in\mB_\Delta$ and $\delta(s_i)=1$ for all $i=1,2,\ldots,r$. This map is a valuation for the standard monomial theory of $C(X)$ by Theorem \ref{teo:SMT_cox}. Hence we may apply Theorem \ref{thm:degeneration}. The special fiber is isomorphic to the ring in the statement of the theorem again by Theorem \ref{teo:SMT_cox}.

Moreover, for this valuation map the Rees algebra is
$$
\calC=\cdots\oplus C(X)t^2\oplus C(X)t\oplus C(X)\oplus Kt^{-1}\oplus K^2t^{-1}\oplus\cdots
$$
with $K$ the ideal of $C(X)$ generated by the sections $s_1,s_2,\ldots,s_r$. So, being $K$ generated by $G$--invariants, the action of $G$ on $C(X)$ induces an action on $\calC$ by letting $G$ act trivially on $t$. In particular $G$ acts on each fiber and it is clear that this $G$--action commutes with the isomorphisms $\calC_a\longrightarrow\calC_{\lambda^{-1}a}$ for any $a\in\mk$ and $\lambda\in\mk^*$. So the deformation is also $G$--equivariant.
\end{proof}
}

We now apply \rev{this} degeneration result to the study of the singularities of the algebra $C(X)$. A variety $X$ is said to have \emph{rational singularities} if there exists a resolution of singularities $\pi:Y\lra X$ of $X$ such that $R^i\pi_*\calO_Y=0$ for $i>0$ and $\pi_*\calO_Y=\calO_X$.  If such a property holds for a resolution then it holds for all resolutions. Finally a ring $A$ is said to have rational singularities if $\Spec A$ has rational singularities.

We have the following properties:
\begin{itemize}
\item[(a)] a multicone over a flag variety has rational singularities (see \cite{kempf}, Theorem~2)
\item[(b)] if $X$ is an affine $G$--variety with rational singularities and $G$ is a reductive group then $X/\!\!/G$ has rational singularities (see \cite{boutot})
\item[(c)] if $(\calX,X)\lra (S,s_0)$ is a flat deformation of a variety with rational singularities $X$ then there \rrev{exists} a neighbourhood $U$ of $s_0$ such that for $s\in U$ the fiber over $s$ has also rational singularities (see \cite{elkik}, \rev{Th\'eor\`eme}~4).
\end{itemize}

Given $D \in \mZ \grD$ consider the subalgebra of $C(X)$ defined as follows
$$
	C_D(X) \doteq \bigoplus_{n\geq 0} \Gamma(X,\calL_{nD}).
$$
This is the projective coordinate ring of a spherical variety, namely \rrev{the image} of $X$ in the projective space $\mP(\grG(X,\calL_D)^*)$. \rrev{It is known that these rings have} rational singularities (see \cite{popov}, or also \cite[Remark 2.5]{alexeev-brion} for another proof which is closer to the constructions of this paper).

\begin{proposition}\label{prp:singraz}
Let $X$ be a wonderful variety and let $D \in \mZ \grD$. Then $C(X)$ and $C_D(X)$ have rational singularities.
\end{proposition}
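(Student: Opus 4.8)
The plan is to derive both statements formally from the degeneration in Corollary~\ref{cor:degeneration_cox} together with the three facts (a), (b), (c) recalled above; the only real input is Corollary~\ref{cor:degeneration_cox}, and what remains is bookkeeping.

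For $C(X)$: by Corollary~\ref{cor:degeneration_cox} the Rees algebra $\calC$ is a flat $\mk[t]$--algebra, so $\Spec\calC\lra\mA^1=\Spec\mk[t]$ is a flat family whose fiber over $0$ is $\Spec\calC_0$ with $\calC_0\simeq\mk[s_1,\dots,s_r]\otimes C(Y)$, and whose fibers over the points $a\neq0$ are all isomorphic to $\Spec C(X)$. I would first observe that $\calC_0$ has rational singularities: the ring $\mk[s_1,\dots,s_r]\otimes C(Y)$ is again the coordinate ring of a multicone over the flag variety $G/P$ --- it equals $A(\lambda_{D_1},\dots,\lambda_{D_q},0,\dots,0)$, obtained from $C(Y)=A(\lambda_{D_1},\dots,\lambda_{D_q})$ by adjoining $r$ copies of the trivial weight $0$, since $\Gamma(G/P,\calL_0)=\mk$ --- so property~(a) applies. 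Property~(c), applied to the flat deformation $\Spec\calC\lra\mA^1$ of the variety $\Spec\calC_0$ with rational singularities, then yields a nonempty Zariski--open $U\subseteq\mA^1$ containing $0$ over which every fiber has rational singularities; picking $a\in U$ with $a\neq0$ and using $\calC_a\simeq\Spec C(X)$ gives that $C(X)$ has rational singularities.

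For $C_D(X)$: if $D=0$ then $C_D(X)=\Gamma(X,\calO_X)=\mk$ is trivially rational, so assume $D\neq0$; and if $\Gamma(X,\calL_{nD})=0$ for all $n>0$ then again $C_D(X)=\mk$, so assume $\Gamma(X,\calL_{nD})\neq0$ for some $n>0$. Since $\Pic(X)\simeq\mZ\grD$ is a free abelian group, the cone of effective classes in $\Pic(X)\otimes\mR$ is strictly convex: if $\calL_E$ and $\calL_{-E}$ both had nonzero sections, their product would be a nonzero constant on the complete irreducible variety $X$, forcing $\calL_E\simeq\calO_X$ and hence $E=0$. It follows that $\Gamma(X,\calL_{mD})=0$ for all $m<0$, so that
$$
	C_D(X)\;=\;\bigoplus_{n\geq0}\Gamma(X,\calL_{nD})\;=\;\bigoplus_{n\in\mZ}\Gamma(X,\calL_{nD}).
$$
The right--hand side is the invariant ring $C(X)^H$ for the action on the affine variety $\Spec C(X)$ of the diagonalizable --- hence reductive --- group $H\doteq\Hom(\mZ\grD/\mZ D,\mk^*)$ induced by the $\mZ\grD$--grading of $C(X)$. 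As $C(X)$ has rational singularities by the previous paragraph, property~(b) gives that $C_D(X)=C(X)^H$ has rational singularities.

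I do not foresee a genuine obstacle; the two points that require a little attention are recognizing $\mk[s_1,\dots,s_r]\otimes C(Y)$ as a multicone over a flag variety (so that (a) is directly applicable, without having to invoke separately the stability of rational singularities under products with smooth varieties) and checking that passing from $\bigoplus_{n\geq0}$ to $\bigoplus_{n\in\mZ}$ changes nothing, which is the strict--convexity remark above. A variant would be to apply Theorem~\ref{thm:degeneration} directly to $C_D(X)$ --- the standard monomial theory of Theorem~\ref{teo:SMT_cox} and the valuation $\delta$ of Corollary~\ref{cor:degeneration_cox} restrict to the $\mZ\grD$--graded subalgebra $C_D(X)$ because straightening relations are $\mZ\grD$--homogeneous --- but identifying the special fiber then requires essentially the same computation.
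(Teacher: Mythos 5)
Your proof is correct and follows essentially the same route as the paper: rational singularities for $C(X)$ via the degeneration of Corollary~\ref{cor:degeneration_cox} together with (a) and (c), and for $C_D(X)$ by realizing it as an invariant ring of $C(X)$ under a diagonalizable group and applying (b). The only (harmless) difference is that you apply Boutot once to the full diagonalizable group $\Hom(\mZ\grD/\mZ D,\mk^*)$, whereas the paper saturates $\mZ D$ first and applies (b) twice, to a torus and then to a finite group.
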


\begin{proof}
By Corollary \ref{cor:degeneration_cox} we have that $C(X)$ is a deformation of a multicone over a flag variety, which has rational singularities by (a), hence $C(X)$ has rational singularities as well by (c).

In order to show the second claim, let $\widetilde D \in \mZ \grD$ be such that $\mQ D \cap \mZ \grD = \mZ \widetilde D$. Then the inclusions $\mZ D \subset \mZ \widetilde D \subset \mZ \grD$ define a torus $S \doteq \Hom(\mZ \grD / \mZ \widetilde D,\mC^*)$ and a finite group $\grG \doteq \Hom(\mZ \widetilde D / \mZ D,\mC^*)$. Moreover, we have natural actions of $S$ on $C(X)$ and of $\grG$ on $C(X)^S$, and $C_D(X) = (C(X)^S)^\grG$. Therefore, by (b) it follows that $C_D(X)$ has rational singularities as well.
\end{proof}

\vskip 1cm

\end{document}